\newtheorem{theorem}{Theorem}[section]
\newtheorem{lemma}[theorem]{Lemma}
\newtheorem{cor}[theorem]{Corollary}
\theoremstyle{definition}
\newtheorem{definition}[theorem]{Definition}
\newtheorem{example}[theorem]{Example}
\theoremstyle{remark}
\theoremstyle{problem}
\newtheorem{problem}[theorem]{Problem}
\numberwithin{equation}{section}
\newcommand{\To}{\longrightarrow}
\def\V{\Vert}
\begin{document}

\title{The Ball Fixed Point Property in  spaces of continuous functions}

\author[A.\ Avil\'es]{Antonio Avil\'es}
\address{Universidad de Murcia, Departamento de Matem\'{a}ticas, Campus de Espinardo 30100 Murcia, Spain.} 
\email{avileslo@um.es}

\author[M. Jap\'on]{Maria Jap\'on}
\address{Institute of Mathematics of the University of Sevilla, IMUS. Avda. Reina Mercedes, s/n. 41012 - Sevilla}
\email{japon@us.es}

\author[C. Lennard]{Christopher Lennard}
\address{University of Pittsburgh, Department of Mathematics, Thackeray Hall, Pittsburgh, PA 15260, United States}
\email{lennard@pitt.edu}

\author[G. Mart\'{\i}nez]{Gonzalo Mart\'{\i}nez-Cervantes}
\address{Universidad de Murcia, Departamento de Matem\'{a}ticas, Campus de Espinardo 30100 Murcia, Spain.} 
\email{gonzalo.martinez2@um.es}

\author[A. Stawski]{Adam Stawski}
\address{University of Pittsburgh, Department of Mathematics, Thackeray Hall, Pittsburgh, PA 15260, United States}
\email{ACS192@pitt.edu}

\thanks{Avil\'{e}s, Mart\'{\i}nez-Cervantes and Jap\'{o}n were supported by Fundaci\'{o}n S\'{e}neca - ACyT Regi\'{o}n de Murcia project 21955/PI/22, Avil\'{e}s and Mart\'{\i}nez Cervantes supported by Agencia Estatal de Investigación (Government of Spain) and ERDF project PID2021-122126NB-C32. Avil\'{e}s was partially supported by  the Grant PID2023-148294NB-I00 funded by MICIU/AEI/ 10.13039/501100011033 and  Avil\'{e}s and Lennard
 were  partially supported by the Institute of Mathematics of the University of Sevilla, IMUS}
\thanks{M. Japón is supported by  the Grant PID2023-148294NB-I00 funded by MICIU/AEI/ 10.13039/501100011033 and FQM-127}

\keywords{Fixed point property, Nonexpansive mapping, spaces of continuous functions, extremally disconnected, $F$-space}

\subjclass[2020]{46B06, 46B26, 46E15, 54H25, 54E40, 54G05}

\begin{abstract}

A Banach space $X$ has the ball fixed point property (BFPP) if  for every closed ball $B$ and for every nonexpansive mapping $T\colon B\to B$, there is  a fixed point. We study the BFPP for $C(K)$-spaces. Our goal is to determine topological properties over $K$ that may determine the failure or fulfillment of the BFPP for the 
space of continuous functions $C(K)$.  We prove that the class of compact spaces $K$ for which the BFPP holds  lies between  the class of extremally disconnected compact spaces and the class of compact $F$-spaces. We give a family of examples of $F$-spaces $K$ for which the BFPP fails. As a result, we prove that for every cardinal $\kappa$,  $\kappa$-order completeness  or $\kappa$-hyperconvexity of $C(K)$ are not enough for the BFPP and we obtain that $\ell_\infty/c_0 = C(\mathbb{N}^*)$  fails BFPP under the Continuum Hypothesis. The space  $C([0,+\infty)^*)$ is also analyzed. It is left as an open problem whether all compact spaces for which  the BFPP  holds are in fact the  extremally disconnected compact sets.

\end{abstract}

\maketitle

\section{Introduction}

The classical Banach and Schauder-Brouwer theorems ensure the existence of fixed points for contractions in complete metric spaces and for continuous functions in compact convex sets of Banach spaces. In the middle of these two results, the existence of fixed points for nonexpansive mappings (Lipschitz with constant $1$) is a rather delicate matter, that is far from being well understood. For instance, it is known that the existence of fixed points for a selfmap  defined on a closed convex bounded subset of Banach space is strongly related to the  geometrical features of the Banach space itself. In fact,   uniform convexity, uniform smoothness,  normal structure, Kadec Klee and Opial conditions,  and a further extensive collection of geometrical properties generally studied under scope of  Banach space theory, give support to a vast area of research trying to determine  the precise connection between the geometry of a Banach space $X$ and   the existence of fixed points for nonexpansive selfmaps. Certainly, one of the major open problems in the area is to know  whether every nonexpansive selfmap defined on a closed convex bounded subset of a reflexive Banach space (or superreflexive) has a fixed point. For a deep insight into how metric fixed point theory has evolved in the last 60 years, the reader can consult the following list of references, which is by no means exhaustive \cite{BS, todos, DJ, DJ2, ELOS, GK,KS, Lin1,Lin2}.\\
  
At this point, it is worth mentioning that, in the absence of norm-compact\-ness,  the existence of a fixed point for a nonexpansive operator is not guaranteed. In the framework of  $C(K)$ spaces, the following two examples are folklore:  
Firstly,  let $B$ denote the closed unit ball of $c$, the Banach space of all  convergent sequences or, equivalently, the space of all  continuous functions on the one-point compactification of $\mathbb{N}$. Let  $T\colon B\to B$  be defined by $T(x_1,x_2,\cdots)=(1,-1, x_1,x_2,\cdots )$. Then $T$  is   a fixed point free isometry. Secondly, let $B$ now denote  the closed unit ball of $C([-1,1])$ and  $T\colon B\to B$ be given by $T(f)(t)=\min\{1, \max\{-1, f(t)+2t\}\}$.  Likewise, it can be proved that $T$ is nonexpansive and fixed point free \cite[Example 3.4]{GK}. \\


We introduce some basic notations and definitions.

A nonexpansive mapping over a  metric space $(M,d)$ is a function $T\colon M\To M$ such that $d(Tx,Ty)\leq d(x,y)$ for all $x,y\in M$.   It is said that the metric space $(M,d)$ has the fixed point property (FPP) if every nonexpansive mapping $T\colon M\to M$ has a fixed point. Given a Banach space $X$, we denote its closed unit ball as $B_X = \{x\in X : \|x\|\leq 1\}$. We introduce the following definition:
\begin{definition}
	A Banach space $X$ has the ball fixed point property (BFPP) if every nonexpansive mapping $T\colon B_X\to B_X$,  has a fixed point. 
\end{definition}

Notice that the above definition is equivalent to requiring that every closed ball has the FPP, since the problem can be transferred  to  the existence of a fixed point for a nonexpansive map defined on the closed unit ball  of $X$ by composing the original mapping with a translation and a dilation.

As it is customary, all topological spaces will be assumed to be Hausdorff, and given a compact space $K$, we consider the Banach space $$C(K) = \{f\colon K\To\mathbb{R} : f\text{ is continuous}\}$$
endowed with the norm $\|f\|_\infty = \max\{|f(x)| : x\in K\}$.  \\

Motivated by the previous counterexamples in $C(\mathbb{N}\cup\{\infty\})$ and $C([-1,1])$ (which fail the BFPP) and by the fact that $\ell_\infty=C(\beta\mathbb{N})$ does fulfill the BFPP (see below), the following question seems natural and it is the main object of this paper:

\begin{problem}\label{mainproblem}
For what compact spaces $K$ does $C(K)$ have the ball fixed point property?	
\end{problem}


Through the article, we will prove that this family of compact sets lies between the class of extremally disconnected compact spaces and the class of compact F-spaces. Namely, the following will be shown:
$$K \text{ is extremally disconnected} \Longrightarrow C(K) \text{ has BFPP} \Longrightarrow   K \text{ is an  }F\text{-space.}$$

The first implication is proved in Section~\ref{ed} as a consequence of a result of Baillon \cite{Baillon} about hyperconvex metric spaces. It generalizes some existing results in the literature. Sine \cite{Sine} showed that spaces $L^\infty(\mu)$ for finite measure $\mu$ enjoy this property, and these are isometric to spaces $C(K)$ where $K$ is the Stone space of the corresponding measure algebra.  More generally, in the context of Banach lattices, Soardi \cite{Soardi} proved that duals of $AL$-spaces have the BFPP, and that includes $C(K)$ for $K$ so-called hyperstonian. A particular case of this is $\ell_1^* = \ell_\infty = C(\beta\mathbb{N})$. All these compact spaces are extremally disconnected. Actually, we do not know whether there is a non-extremally-disconnected compact $K$ for which  the BFPP in $C(K)$ still holds. (Notice that $\ell_\infty$ contains isometrically every separable Banach space so it seems useless to study the FPP in $\ell_\infty$ beyond the family of closed balls, since every counterexample of a fixed point free nonexpansive selfmap defined on a separable domain finds its replica in $\ell_\infty$).\\

The second implication of the previous scheme  is proved in Section~\ref{F}, by a straightforward exhibition of a fixed point free nonexpansive mapping from the failure of the $F$-space condition. Nevertheless, we show that being an $F$-space is not sufficient to ensure the BFPP.  In Section~\ref{sectionExamples} we  exhibit a category of  compact $F$-spaces $K$ for which every $C(K)$ fails the BFPP, where $K$ depends on two infinite cardinals $\kappa<\Gamma$. What is more, extremal disconnection is equivalent to  order-completeness of $C(K)$ in the pointwise order, and our examples show that neither order-completeness relative to a cardinality nor hyperconvexity relative to a cardinality are enough for the BFPP. \\


Once that we know that $C(K)$ fails the BFPP when $K$ is  non-$F$-space (Section 3) but that being  an $F$-space seems not to be determinant for the fulfilment of the BFPP (Section 4), in Section~\ref{remainder}, we will focus on a very particular   $F$-space: the remainder  $\mathbb{N}^*=\beta\mathbb{N}\setminus\mathbb{N}$.  Under the Continuum Hypothesis, we can derive that the space of continuous functions $C(\mathbb{N}^*)$, which is isometric to the classical quotient space $\ell_\infty/c_0$, fails the BFPP. We do not know whether CH is really necessary. A similar statement for the remainder of $\beta[0,+\infty)$ is also exhibited.\\

We will conclude  the paper with some comments and open problems that are derived from the analysis developed in the previous sections.

\section{Extremal Disconnection:  A sufficient condition for $C(K)$ to have   the BFPP}\label{ed}

The notion of hyperconvexity was introduced by Aronszajn and Panichpakdi in \cite{AP} as  an intrinsic  tool for extending  uniform continuous mappings.

\begin{definition}
	A metric space $(M,d)$ is said to be hyperconvex if for every collection of points $(x_\alpha)_{\alpha\in I}\subset M$ and positive radii $\{r_\alpha\}_{\alpha\in I}$ with the property that $d(x_\alpha, x_\beta)\le r_\alpha+r_\beta$ for all $\alpha,\beta\in I$, we have a nonempty intersection of closed balls $$\bigcap_{\alpha\in I} \{ x\in M ; d(x,x_\alpha)\leq r_\alpha \}\ne\emptyset.$$  	
\end{definition}

Notice that if $M$ is a convex subset of a normed space,  condition  $d(x_\alpha, x_\beta)\le r_\alpha+r_\beta$ is equivalent to $B(x_\alpha,r_\alpha)\cap B(x_\beta,r_\beta) \neq \emptyset$ for $\alpha\ne\beta$ (here $B(x,r)$ denotes the closed ball of center $x$ and radius $r$), so hyperconvexity means that any family of mutually intersecting closed balls has nonempty intersection. This is also called the ball intersection property, binary intersection property or similar. Besides, notice  that a normed space $X$ is hyperconvex if and only if a closed ball in $X$ is hyperconvex.

\begin{theorem}[Baillon \cite{Baillon}] \label{hyper}
	If $(M,d)$ is a bounded hyperconvex metric space, then every nonexpansive $T\colon M\to M$ has a fixed point. 
\end{theorem}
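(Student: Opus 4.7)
The plan is a standard minimal-invariant-set argument, with hyperconvexity playing two roles: it provides a Zorn-type compactness for admissible sets, and it yields the existence of Chebyshev centers. Let $\fF$ denote the family of nonempty closed $T$-invariant subsets of $M$ that are \emph{admissible}, that is, intersections of closed balls of $M$, ordered by reverse inclusion. I would first show that every chain $\{A_\alpha\}_\alpha$ in $\fF$ has an upper bound by verifying $\bigcap_\alpha A_\alpha \neq \emptyset$: writing each $A_\alpha=\bigcap_{i\in I_\alpha}B(x_i^\alpha,r_i^\alpha)$ and pooling all the defining balls, any two of them contain a common point (any element of whichever of $A_\alpha, A_\beta$ is smaller along the chain), so their centers satisfy the hyperconvex distance condition $d(x_i^\alpha,x_j^\beta)\le r_i^\alpha+r_j^\beta$; hyperconvexity then yields a point in the full intersection. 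The resulting set is again admissible and $T$-invariant, so Zorn's lemma furnishes a minimal element $A_0 \in \fF$.

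Next I would exploit minimality to simplify $A_0$. The admissible hull of $T(A_0)$,
$$\cov(T(A_0)) := \bigcap\{B(x,r) : T(A_0) \subseteq B(x,r)\},$$
intersected with $A_0$ is admissible, nonempty (it contains $T(A_0)$), and $T$-invariant (since $T$ maps $A_0$ into $T(A_0)$, and $T(A_0)$ lies in both factors); so by minimality it equals $A_0$, yielding the key equivalence
$$A_0 \subseteq B(x,r) \ \Longleftrightarrow\ T(A_0) \subseteq B(x,r).$$
Set $d_0 := \sup\{d(x,y):x,y\in A_0\}$ and $r_0 := d_0/2$, and define the Chebyshev center
$$C := \{x \in A_0 : A_0 \subseteq B(x, r_0)\} = A_0 \cap \bigcap_{y \in A_0} B(y,r_0).$$
Then $C$ is admissible by construction, and nonempty by a second application of hyperconvexity to the collection of balls defining $A_0$ together with $\{B(y,r_0)\}_{y\in A_0}$: pairs of defining balls meet because $A_0\neq\emptyset$; a defining ball of $A_0$ and a new ball $B(y,r_0)$ both contain $y$; and two new balls $B(y,r_0),B(y',r_0)$ satisfy the distance condition because $d(y,y')\le d_0 = r_0+r_0$.

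The delicate step, and what I expect to be the main obstacle, is the $T$-invariance of $C$: a direct attempt fails because nonexpansiveness of $T$ only controls distances from $Tx$ to points of $T(A_0)$, which in principle could be a proper subset of $A_0$, so one cannot conclude $A_0 \subseteq B(Tx,r_0)$ from nonexpansiveness alone. Here the minimality equivalence does the work. For $x \in C$ and any $z \in A_0$ one has $d(Tx,Tz) \le d(x,z) \le r_0$, so $T(A_0) \subseteq B(Tx,r_0)$; the equivalence above upgrades this to $A_0 \subseteq B(Tx,r_0)$, i.e., $Tx \in C$. Since $C \in \fF$ and $C \subseteq A_0$, minimality forces $C = A_0$. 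But then every pair $x,y \in A_0$ satisfies $d(x,y) \le r_0 = d_0/2$; taking the supremum yields $d_0 \le d_0/2$, forcing $d_0=0$. Thus $A_0$ is a singleton, whose unique point is the desired fixed point of $T$.
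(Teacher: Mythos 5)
Your argument is correct and complete: the Zorn's lemma step, the identity $\cov(T(A_0))\cap A_0=A_0$ forcing the equivalence $A_0\subseteq B(x,r)\Leftrightarrow T(A_0)\subseteq B(x,r)$, the nonemptiness of the Chebyshev center via a second application of hyperconvexity, and the diameter-halving contradiction are all sound. The paper does not prove this statement --- it cites it as Baillon's theorem --- but what you have written is essentially the standard (and original) minimal-admissible-set proof, so there is nothing to compare against and nothing to fix.
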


From this, $C(K)$ has the BFPP whenever $C(K)$ is hyperconvex. And when is $C(K)$ hyperconvex? The answer is in the following theorem that can by found in \cite[Theorem 2.1]{Z}, \cite[Theorems 4.3.6 and 4.3.7]{AK}

\begin{theorem}[Kelley \cite{K}, Goodner \cite{Goodner}, Nachbin \cite{Nachbin}]\label{Zippin}
	For a Banach space $X$ the following assertions are equivalent: 
		\begin{enumerate}
		
		\item $X$ is hyperconvex.
		
		\item $X$ is isometrically injective (also called 1-injective, or $P_1$-space)
		
		\item $X$ is linearly isometric to some $C(K)$ with $K$ extremally disconnected. 
		
		\item\label{order-complete} $X$ is linearly isometric to some $C(K)$ which is order-complete.
		\end{enumerate}
\end{theorem}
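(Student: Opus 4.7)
The plan is to establish the cycle $(1)\Rightarrow(2)\Rightarrow(3)\Rightarrow(1)$ together with the separate equivalence $(3)\Leftrightarrow(4)$. The latter is Nakano's classical theorem relating Dedekind completeness of $C(K)$ in its pointwise order to extremal disconnectedness of $K$: one checks that a continuous regularization of the pointwise supremum of a bounded family in $C(K)$ exists in $C(K)$ precisely when closures of open sets in $K$ are open.

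For $(1)\Rightarrow(2)$, suppose $X$ is hyperconvex and we are given a linear contraction $T\colon Y\to X$ where $Y$ is an isometric subspace of some Banach space $Z$. A standard Zorn's lemma argument reduces the problem to extending $T$ by a single vector $z\in Z\setminus Y$, which amounts to finding $v=\tilde T(z)\in\bigcap_{y\in Y}B(T(y),\|z-y\|)$. Since $\|T(y_1)-T(y_2)\|\leq\|y_1-y_2\|\leq\|z-y_1\|+\|z-y_2\|$ for all $y_1,y_2\in Y$, the family of balls is mutually intersecting, and hyperconvexity of $X$ supplies the required $v$.

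For $(3)\Rightarrow(1)$, invoke the equivalence with $(4)$ and take $X=C(K)$ order-complete. Given a mutually intersecting family $\{B(f_\alpha,r_\alpha)\}_{\alpha\in I}$ in $C(K)$, the intersection condition $\|f_\alpha-f_\beta\|_\infty\leq r_\alpha+r_\beta$ rewrites as $f_\alpha-r_\alpha\leq f_\beta+r_\beta$ for all $\alpha,\beta$, so the supremum $g:=\sup_\alpha(f_\alpha-r_\alpha)$ exists in the Banach lattice $C(K)$ and automatically satisfies $f_\beta-r_\beta\leq g\leq f_\beta+r_\beta$ for every $\beta$, placing $g$ in $\bigcap_\alpha B(f_\alpha,r_\alpha)$.

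The real obstacle is $(2)\Rightarrow(3)$, which I would attack via Kakutani's representation theorem for $AM$-spaces. First embed $X$ isometrically into $C(K_0)$ with $K_0=(B_{X^*},w^*)$; by injectivity, the identity on $X$ extends to a norm-one linear projection $P\colon C(K_0)\to X$. Using $P$ to transport structure, one equips $X$ with the order unit $\mathbf{1}_X:=P(\mathbf{1}_{K_0})$ and verifies the Kakutani axioms so that $X$ becomes an $AM$-space with strong unit; Kakutani's theorem then identifies $X$ isometrically and as a Banach lattice with some $C(K)$. Finally, the injectivity hypothesis carried to this $C(K)$ forces $K$ to be extremally disconnected, because an open set $U$ whose closure is not open would produce a bounded monotone family in $C(K)$ with no continuous supremum, contradicting the order-completeness that injectivity yields through $(1)\Leftrightarrow(2)$ together with Step two applied to $C(K)\cong X$. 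The principal technical difficulty is extracting the $AM$-lattice structure of $X$ purely from the algebraic hypothesis of isometric injectivity; once Kakutani's theorem is in play, the identification of extremal disconnectedness with injectivity is routine.
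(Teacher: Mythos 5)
First, a point of comparison: the paper does not prove this theorem at all --- it is quoted as a classical result of Kelley, Goodner and Nachbin, with the proof delegated to \cite[Theorem 2.1]{Z} and \cite[Theorems 4.3.6 and 4.3.7]{AK}. So your sketch is measured against the standard literature argument rather than against anything in the text. Your implications $(1)\Rightarrow(2)$, $(3)\Leftrightarrow(4)$ and $(3)\Rightarrow(1)$ are correct and are essentially the textbook arguments: the one-vector-at-a-time extension using the mutually intersecting balls $B(Ty,\|z-y\|)$, the Stone--Nakano theorem, and the observation that $\sup_\alpha(f_\alpha-r_\alpha)$ is squeezed between $f_\beta-r_\beta$ and $f_\beta+r_\beta$ for every $\beta$.

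The gap is in $(2)\Rightarrow(3)$, at exactly the point where the history of this theorem is delicate. You propose the order unit $\mathbf{1}_X:=P(\mathbf{1}_{K_0})$, where $P\colon C(K_0)\to X$ is a norm-one projection supplied by injectivity. That element need not be an order unit; it can even vanish. Take $X=\mathbb{R}$, so that $K_0=[-1,1]$ and the canonical embedding is $j(x)(s)=sx$; then $P(g)=j\bigl((g(1)-g(-1))/2\bigr)$ is a norm-one projection onto $j(X)$ with $P(\mathbf{1}_{K_0})=0$. In general nothing forces $\|P(\mathbf{1}_{K_0})\|=1$, let alone that it be an extreme point of $B_X$, and without that the Kakutani axioms cannot be verified. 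This is precisely why Goodner and Nachbin originally needed the extra hypothesis that $B_X$ has an extreme point, and why Kelley's contribution was to manufacture the unit intrinsically. The standard repair is to prove $(2)\Rightarrow(1)$ first (realize a mutually intersecting family of balls as the trace of a single ball in a one-point metric extension $Z\supseteq X$ and push the new point back into $X$ with the norm-one extension of $\mathrm{id}_X$), then use the binary intersection property to extract an extreme point $e$ of $B_X$, and only then impose the order and invoke Kakutani; the final step, that injectivity of $C(K)$ forces order-completeness and hence extremal disconnectedness of $K$, is fine as you state it. With $e$ in place of $P(\mathbf{1}_{K_0})$ the rest of your outline goes through.
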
 

\begin{cor}\label{extdisc}
	$C(K)$ has the BFPP whenever $K$ is extremally disconnected.
\end{cor}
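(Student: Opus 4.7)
The plan is to chain together the two theorems just stated, so the proof will be essentially a direct combination with a small observation about closed balls. First, I would invoke Theorem~\ref{Zippin} (Kelley--Goodner--Nachbin) to deduce that, since $K$ is extremally disconnected, $C(K)$ is hyperconvex as a normed space.

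Next, I would promote hyperconvexity of the whole space to hyperconvexity of each closed ball. This uses the remark made right after the definition of hyperconvexity: in a convex subset of a normed space, the pairwise ball intersection condition $d(x_\alpha,x_\beta)\le r_\alpha+r_\beta$ is equivalent to the geometric statement $B(x_\alpha,r_\alpha)\cap B(x_\beta,r_\beta)\ne\emptyset$. Since any closed ball $B=B(x_0,R)\subset C(K)$ is convex and, by hyperconvexity of $C(K)$, any family of pairwise intersecting closed balls centered at points of $B$ has a nonempty intersection in $C(K)$, one only needs to check that this intersection can be arranged to lie inside $B$. This is standard: one simply adds $B(x_0,R)$ itself to the family (it intersects every $B(x_\alpha,r_\alpha)$ because $x_\alpha\in B$), and applies hyperconvexity in $C(K)$ to the enlarged family. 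Thus $B$ is itself a bounded hyperconvex metric space.

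Finally, I would apply Baillon's Theorem~\ref{hyper} to each closed ball $B\subset C(K)$: every nonexpansive $T\colon B\to B$ has a fixed point. By the definition of BFPP, this is exactly the conclusion.

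The only possible subtlety here is the step that promotes hyperconvexity from $C(K)$ to its closed balls, but as noted above this is immediate from the ball-intersection formulation. No additional combinatorial or topological argument about $K$ itself is needed beyond what is already packaged into Theorem~\ref{Zippin}.
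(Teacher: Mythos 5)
Your proposal is correct and follows exactly the route the paper takes: Theorem~\ref{Zippin} gives hyperconvexity of $C(K)$, the remark after the definition (which you prove by adjoining $B(x_0,R)$ to the family) transfers hyperconvexity to each closed ball, and Baillon's Theorem~\ref{hyper} then yields a fixed point. The paper leaves the ball-transfer step as an unproved remark, so your explicit verification of it is a welcome but inessential addition.
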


We recall the definitions. We say that $C(K)$ is order-complete if every subset of $C(K)$ with an upper bound has a supremum in $C(K)$, where the upper bound and the supremum are considered with respect to the pointwise partial order, $f\leq g$ when $f(x)\leq g(x)$ for all $x\in K$. This is equivalent, as stated above, to $K$ being an extremally disconnected compact space, which means that the closure of every open subset of $K$ is also open (see also \cite[Proposition 7.7]{Schaefer}). We will not need this here, but we also recall that a Banach space $X$ is isometrically injective if whenever $Y\subset Z$ are Banach spaces, every bounded linear operator $T\colon Y\To X$ extends to a linear operator $\tilde{T}\colon Z\To X$ of the same norm.
\medskip

The hyperstonian spaces, that include the Stone spaces of measure algebras mentioned at the beginning of this section, are all extremally disconnected, but they have the extra property that $C(K)$ can be represented as a dual space. There are however extremally disconnected spaces $K$ for which $C(K)$ is not even isomorphic to any dual space (see for instance \cite[Section 1.4]{A} or \cite[Theorem 4.3]{LC}), so duality does not play any key role in the BFPP  in contrast to the arguments used by Soardi \cite{Soardi}.
\medskip 

For a more general picture of the class of extremally disconnected spaces and their spaces of continuous functions we refer to \cite{Walker}. We may mention just a few facts.  Extremally disconnected compact spaces are exactly the Stone spaces of order-complete Boolean algebras. Also, for every compact space $L$ there is a unique extremally disconnected compact space $K$ and continuous surjection $\phi\colon K\To L$ (this is called the Gleason cover of $L$) such that $\phi$ does not map any proper closed subset of $K$ onto $L$. In this situation, there is a natural inclusion $C(L)\hookrightarrow C(K)$ and $C(K)$ is the natural order-completion of $C(L)$ \cite{Gleason}. Of particular interest is the Gleason space of a convergent sequence, which is the same as the \v{C}ech-Stone compactification $\beta\mathbb{N}$ of the natural numbers. The compact space $\beta\mathbb{N}$ contains a dense discrete open copy of $\mathbb{N}$ and every bounded function $\mathbb{N}\To \mathbb{R}$ extends to a unique continuous function $\beta\mathbb{N}\To \mathbb{R}$, making $C(\beta\mathbb{N})$ isometric to the space of bounded sequences $\ell_\infty$. The compact space $\mathbb{N}^* = \beta\mathbb{N}\setminus\mathbb{N}$, known as the \v{C}ech-Stone remainder of the natural numbers, is not extremally disconnected.

\medskip

\section{ $F$-spaces:  a necessary condition for $C(K)$ to have the BFPP}\label{F}

The concept of $F$-space was introduced in \cite{GH} 
  as a topological space $K$ for which every finitely generated ideal in $C(K)$ is principal. Equivalent definitions are the following:
  
  \begin{lemma}\label{equiv-F}\cite[Section 14.25]{GH} Let $K$ be a compact  space. The following are equivalent:
  
  \begin{itemize} 
  
  \item[i)] $K$ is an $F$-space.
  \item[ii)] Disjoint cozero sets have disjoint closures.
  \item[iii)] For every continuous function $g\in C(K)$ there exists a continuous $f\colon K\To [-1,1]$ such that $g\cdot f = |g|$. 
  
  \end{itemize}
  \end{lemma}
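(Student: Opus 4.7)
The plan is to close the loop $(ii) \Rightarrow (iii) \Rightarrow (ii)$ by pure point-set topology together with Urysohn's lemma, and then to bridge to the algebraic condition $(i)$ via the continuous sign functions supplied by $(iii)$. The compactness and Hausdorffness of $K$ (hence normality) will be used repeatedly, as will the basic fact that for $g\in C(K)$, the set $\{g>0\}=\mathrm{coz}(g^+)$ is a cozero set.

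For $(ii)\Rightarrow(iii)$, fix $g\in C(K)$ and form the disjoint cozero sets $U^+=\{g>0\}$ and $U^-=\{g<0\}$. By $(ii)$, $\overline{U^+}\cap\overline{U^-}=\emptyset$, so Urysohn's lemma produces a continuous $f\colon K\to[-1,1]$ with $f\equiv 1$ on $\overline{U^+}$ and $f\equiv -1$ on $\overline{U^-}$; a case check on $U^+$, $U^-$, and $\{g=0\}$ shows $gf=|g|$. For the reverse implication $(iii)\Rightarrow(ii)$, take disjoint cozero sets $U=\mathrm{coz}(u)$, $V=\mathrm{coz}(v)$, and replace $u,v$ by their absolute values so that $u,v\geq 0$ and $uv\equiv 0$; then $|u-v|=u+v$. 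Applying $(iii)$ to $g=u-v$ yields $f\colon K\to[-1,1]$ with $(u-v)f=u+v$, which forces $f=1$ on $U$ and $f=-1$ on $V$. By continuity, $f=1$ on $\overline{U}$ and $f=-1$ on $\overline{V}$, so $\{f>1/2\}$ and $\{f<-1/2\}$ separate the two closures.

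The harder part is the equivalence with the algebraic condition $(i)$. For $(i)\Rightarrow(iii)$, apply principality to the ideal $(g^+,g^-)$: if it is generated by some $h\in C(K)$, there exist $\alpha,\beta,\gamma,\delta\in C(K)$ with $g^+=h\alpha$, $g^-=h\beta$ and $h=g^+\gamma+g^-\delta$, and from these relations one reads off $f=\alpha-\beta$ (truncated to land in $[-1,1]$) as the desired sign function satisfying $gf=|g|$. For $(iii)\Rightarrow(i)$, by induction on the number of generators it suffices to show that every two-generator ideal $(g_1,g_2)$ is principal; the natural candidate generator is $h=(g_1^2+g_2^2)^{1/2}$. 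The inclusion $h\in(g_1,g_2)$ is routine using $(iii)$ applied to each $g_i$, but the reverse inclusion $g_i\in(h)$ demands continuous $\phi_i$ with $g_i=h\phi_i$, and the formal quotient $g_i/h$ is a priori defined only on $\mathrm{coz}(h)$. Extending these bounded quotients continuously across $\{h=0\}$ is the technical heart of the argument and the step I expect to be the main obstacle: this is where the equivalent topological condition $(ii)$ must be invoked to force compatibility between the signs of $g_1$ and $g_2$ near their common zero set, allowing the two extensions to be made consistently.
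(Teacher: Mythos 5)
The paper does not prove this lemma at all: it is quoted verbatim from \cite[Section 14.25]{GH} (Gillman--Jerison, Theorem~14.25), and in the rest of the paper only the equivalence ii)$\Leftrightarrow$iii) is ever used. Your proof of that equivalence is correct and is essentially the classical one: Urysohn's lemma on the disjoint closures for ii)$\Rightarrow$iii), and the function $g=|u|-|v|$ for iii)$\Rightarrow$ii). The problems are both in the bridge to the algebraic condition i), and one of them is a genuine error rather than an omission.

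For i)$\Rightarrow$iii), the formula $f=\alpha-\beta$ does not work. From $g^{+}=h\alpha$ and $g^{-}=h\beta$ you get $\alpha-\beta=g/h$ on the cozero set of $h$, hence $g(\alpha-\beta)=g^{2}/h$, which equals $|g|$ only if the generator happens to satisfy $h=|g|$ there. Principality gives you no control over which generator you receive: for instance $h=2|g|$ generates $(g^{+},g^{-})$ whenever $K$ is an $F$-space, and then $\alpha-\beta\equiv\pm\tfrac12$ on $\{g\neq0\}$, so $g(\alpha-\beta)=\tfrac12|g|$ and truncating to $[-1,1]$ changes nothing. The relation $h=g^{+}\gamma+g^{-}\delta$, which your recipe never actually uses, is what repairs the argument: on $\{g>0\}$ one has $h\neq0$, $\beta=0$ and $\alpha\gamma=1$; on $\{g<0\}$ one has $\alpha=0$ and $\beta\delta=1$; so $f=\alpha\gamma-\beta\delta$ (truncated) is a correct continuous sign, whereas $\alpha-\beta$ is not.

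For iii)$\Rightarrow$i), you have named the obstacle but not overcome it, and the framing of the missing step is misleading. Extending $g_{i}/h$ from $\mathrm{coz}(h)$ across $Z(h)=Z(g_{1})\cap Z(g_{2})$, where $h=(g_{1}^{2}+g_{2}^{2})^{1/2}$, is not a question of reconciling the \emph{signs} of $g_{1}$ and $g_{2}$: the quotient $g_{i}/h$ typically takes every value in $[-1,1]$ on each neighbourhood of a point of $Z(h)$, so no separation of two closures as in ii) will produce the extension. What is actually needed is that every cozero set is $C^{*}$-embedded in $K$, and deriving that from ii)/iii) -- via Urysohn's extension theorem, i.e.\ that a subspace is $C^{*}$-embedded if and only if sets completely separated in it are completely separated in $K$ -- is the substantial content of the cited Theorem~14.25. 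As written, this direction is a plan rather than a proof, so the equivalence with i) remains open in your account; the part you have genuinely established, ii)$\Leftrightarrow$iii), is however exactly the part the paper relies on.
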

  
Notice that the function  $f$ in Lemma \ref{equiv-F}.iii)  can be understood as a \emph{continuous sign} for the given $g$,  since $f$ takes value $1$ where $g$ is positive, $-1$ where $g$ is negative and it freely  transitions  from positive to negative in a continuous way where $g$ vanishes.   

Every extremally disconnected compact space is clearly an $F$-space. It turns out that being an $F$-space is a necessary condition for the BFPP, as the following theorem shows (note that the proof is related to that of Theorem 2 of \cite{ACM}):

  \begin{theorem}\label{non-F} Let $K$ be a compact space. If   $C(K)$ verifies the BFPP, then $K$ is an $F$-space.
  \end{theorem}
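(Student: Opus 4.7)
The plan is to prove the contrapositive: if $K$ is not an $F$-space, then $C(K)$ fails the BFPP. By the third clause of Lemma~\ref{equiv-F}, the failure of the $F$-space property produces a function $g\in C(K)$ for which \emph{no} continuous $f\colon K\to[-1,1]$ satisfies $g\cdot f=|g|$, i.e., $g$ admits no "continuous sign." The entire construction is driven by this $g$, in direct analogy with the folklore example $T(f)(t)=\min\{1,\max\{-1,f(t)+2t\}\}$ on $C([-1,1])$ recalled in the introduction, where the obstruction plays the role of $g(t)=2t$.

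Concretely, I would define $T\colon B_{C(K)}\to B_{C(K)}$ by
$$T(f)(x)=\min\bigl\{1,\max\{-1,f(x)+g(x)\}\bigr\}, \qquad x\in K,\ f\in B_{C(K)}.$$
The routine verifications are: $T(f)$ is continuous because it is the composition of $f+g\in C(K)$ with the 1-Lipschitz truncation $s\mapsto \min\{1,\max\{-1,s\}\}$ on $\mathbb{R}$; $\|T(f)\|_\infty\le 1$ by construction, so $T(B_{C(K)})\subset B_{C(K)}$; and $T$ is nonexpansive with respect to $\|\cdot\|_\infty$, since at each $x$ the map $f\mapsto T(f)(x)$ is the composition of the isometry $s\mapsto s+g(x)$ and the 1-Lipschitz truncation.

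The heart of the argument is to derive a contradiction from the existence of a fixed point $f=Tf$. I would do a pointwise case analysis of the fixed-point equation: writing $s=f(x)+g(x)$ and splitting into $s\ge 1$, $-1\le s\le 1$, and $s\le -1$, one reads off that $g(x)>0$ forces $f(x)=1$, that $g(x)<0$ forces $f(x)=-1$, and that the middle case forces $g(x)=0$. Consequently $g(x)f(x)=|g(x)|$ at every $x\in K$, and since $f$ is continuous with values in $[-1,1]$, this furnishes a continuous sign of $g$, contradicting the choice of $g$.

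The main (and essentially only) obstacle is recognizing the right reformulation of the $F$-space property in terms of continuous signs and designing $T$ so that its fixed-point equation exactly encodes it; once one adopts the truncation-plus-shift template from the $C([-1,1])$ folklore example, the verification is mechanical.
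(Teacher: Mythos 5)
Your proof is correct and follows essentially the same strategy as the paper: reduce the $F$-space property to the existence of a continuous sign via Lemma~\ref{equiv-F}(iii), and build a nonexpansive self-map of the unit ball whose fixed points are exactly such signs. The only difference is the explicit formula — you use the truncated translation $f\mapsto\min\{1,\max\{-1,f+g\}\}$ modeled on the $C([-1,1])$ example, while the paper uses the affine map $f\mapsto(1-|g|)f+g$ after normalizing $\|g\|_\infty\le 1$ — and both verifications go through.
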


  \begin{proof} 
Take $g\in C(K)$ and we will find $f$ such that $g\cdot f = |g|$. Without loss of generality we can consider that $g$ is non-null and by considering ${g}/{\|g\|_\infty}$ we can suppose  that $\|g\|_\infty\leq 1$. We define a mapping $T\colon C(K)\To C(K)$ given by
  	$$
  	Tf =\left(1-|g|\right)f + g.
  	 $$
A continuous function $f$ verifying $T(f)=f$  automatically satisfies $g\cdot f = |g|$ so   
we are looking for a fixed point of $T$. Notice that $T(B_{C(K)})\subseteq B_{C(K)}$ because $\|g\|_\infty\leq 1$, and so, for every $f\in B_{C(K)}$ and $x\in K$,
  $$|Tf(x)| \leq \left(1-|g(x)|\right)|f(x)| + |g(x)| \leq 1-|g(x)| + |g(x)| = 1.$$
   Since we assume the BFPP we only need to check that $T$ is nonexpansive.  Indeed, for any $f,f'\in C(K)$ and for any $x\in K$,
  $$|Tf - Tf'|(x) = ( 1 - |g(x)|) |f(x)-f'(x)| \leq |f-f'|(x).$$
  The above shows that $K$ is an $F$-space.
  \end{proof}
  
In order to understand the consequences of this fact, we need to have a picture of the class of compact $F$-spaces. We can recall, for instance, that a compact $F$-space cannot contain any nontrivial convergent sequence. This is because, if $\{x_n\}$ converges to $x\not\in\{x_n: n\in\mathbb{N}\}$, then by Tietze's extension theorem, there is a continuous $g$ with $g(x_n) = (-1)^n/n$, and no continuous $f$ can satisfy $g = f|g|$.

\begin{cor}\label{sequence} If $K$ contains a nontrivial convergent sequence, then $C(K)$ fails the BFPP. In particular, $C(K)$ fails the BFPP for metrizable $K$.
\end{cor}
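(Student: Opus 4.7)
The plan is to reduce the corollary to Theorem~\ref{non-F}: I only need to verify that a compact space $K$ containing a nontrivial convergent sequence cannot be an $F$-space, and then invoke the contrapositive of Theorem~\ref{non-F}. I will work with the characterization of $F$-space given in Lemma~\ref{equiv-F}(iii), exhibiting a $g\in C(K)$ for which no continuous ``sign'' $f$ with $f\cdot g=|g|$ can exist.

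To produce such a $g$, I would first extract from the given nontrivial convergent sequence a subsequence of pairwise distinct points $\{x_n\}_{n\in\mathbb{N}}$ converging to a point $x$ with $x\ne x_n$ for every $n$; this is possible because ``nontrivial'' rules out an eventually constant sequence. The set $F=\{x\}\cup\{x_n:n\in\mathbb{N}\}$ is closed in $K$ (its only accumulation point $x$ is included), and the function defined on $F$ by $g(x_n)=(-1)^n/n$ and $g(x)=0$ is continuous on $F$. Since $K$ is compact Hausdorff and hence normal, Tietze's extension theorem gives a continuous extension $g\in C(K)$.

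Next I would argue that no $f\in C(K)$ satisfies $f(y)\cdot g(y)=|g(y)|$ for all $y\in K$. Indeed, such an $f$ would have to verify $f(x_n)(-1)^n/n = 1/n$ for every $n$, hence $f(x_n)=(-1)^n$; but $x_n\to x$ forces $f(x_n)\to f(x)$, and the sequence $\{(-1)^n\}$ has no limit. Thus condition (iii) of Lemma~\ref{equiv-F} fails and $K$ is not an $F$-space. By Theorem~\ref{non-F}, $C(K)$ fails the BFPP.

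The ``in particular'' statement for metrizable $K$ (taken to mean infinite compact metrizable, since finite $K$ gives a finite-dimensional $C(K)$ where BFPP holds trivially via Brouwer) follows because any infinite compact metric space contains an infinite subset, whose sequential compactness yields a convergent sequence of distinct points. There is essentially no obstacle here — the construction mirrors the informal argument already indicated in the paragraph preceding the corollary; the only minor point to be careful about is the reduction to a subsequence of distinct points different from the limit so that the closed-set definition of $g$ is unambiguous.
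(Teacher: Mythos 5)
Your argument is correct and follows exactly the route the paper takes: build $g$ with $g(x_n)=(-1)^n/n$ via Tietze's extension theorem, observe that any continuous sign $f$ would need $f(x_n)=(-1)^n$ contradicting continuity at the limit, conclude $K$ is not an $F$-space, and apply the contrapositive of Theorem~\ref{non-F}. The only difference is that you spell out the extraction of a distinct-point subsequence and the metrizable case in more detail than the paper, which is harmless.
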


The product of two infinite compact spaces always fails to be an $F$-space as well \cite[11Q.2, page 215]{GJ}. This is because, as it can be deduced from complete regularity, for $i=1,2$, we could find points $x_i^n\in K_i$ and continuous functions $g_i\colon K_i\To [0,1]$ such that $g_i(x^n_i)=1/n$. For $g(p,q) = g_1(p)-g_2(q)$ we cannot find $f$ with $g=f|g|$. If $x_i$ is a cluster point of the sequence $\{x^n_i\}$ for $i=1,2$, then we should have $f(x_1,x_2)=1$ because $(x_1,x_2)\in \overline{\{(x_1^n,x_2^m) : n<m\}}$, but also $f(x_1,x_2)=-1$ because $(x_1,x_2)\in \overline{\{(x_1^n,x_2^m) : n>m\}}$.

\begin{cor}\label{product}
	$C(K_1\times K_2)$ fails the BFPP for any infinite compact spaces $K_1$, $K_2$.
\end{cor}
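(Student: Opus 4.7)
The plan is to reduce via Theorem~\ref{non-F}: it suffices to show that $K_1\times K_2$ is not an $F$-space whenever both factors are infinite. I will falsify condition (iii) of Lemma~\ref{equiv-F} by constructing a continuous $g$ on the product which admits no continuous sign $f$ satisfying $g\cdot f = |g|$.

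Since each $K_i$ is an infinite compact Hausdorff space, I can extract an injective sequence $\{x_i^n\}_{n\in\mathbb{N}} \subseteq K_i$ and, by compactness, fix a cluster point $x_i\in K_i$ of this sequence. By complete regularity together with Tietze's extension theorem, I select continuous $g_i\colon K_i \to [0,1]$ with $g_i(x_i^n) = 1/n$ for every $n$. Set
$$g(p,q) = g_1(p) - g_2(q) \in C(K_1\times K_2).$$
Then $g(x_1^n, x_2^m) = 1/n - 1/m$, which is strictly positive when $n<m$ and strictly negative when $n>m$. Any hypothetical continuous $f$ with $g\cdot f = |g|$ is therefore forced to take value $+1$ at $(x_1^n, x_2^m)$ when $n<m$ and $-1$ when $n>m$.

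The main (and only) delicate step is to exhibit, inside each of the two triangular regions $\{n<m\}$ and $\{n>m\}$, a net converging to the common limit $(x_1,x_2)$. Given any basic open neighborhood $U_1\times U_2$ of $(x_1,x_2)$, the cluster point property makes $\{n : x_1^n\in U_1\}$ and $\{m : x_2^m\in U_2\}$ both infinite. Thus I can first pick some $x_1^n\in U_1$ and then choose $x_2^m\in U_2$ with $m>n$; letting $U_1\times U_2$ run through a neighborhood basis of $(x_1,x_2)$ produces a net inside $\{n<m\}$ converging to $(x_1,x_2)$. The symmetric procedure yields a net inside $\{n>m\}$ with the same limit. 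Continuity of $f$ then forces $f(x_1,x_2) = +1 = -1$, a contradiction.

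This shows that $g$ admits no continuous sign, so by Lemma~\ref{equiv-F} the product $K_1\times K_2$ is not an $F$-space, and Theorem~\ref{non-F} delivers the failure of the BFPP in $C(K_1\times K_2)$. All other ingredients (cluster points in compact Hausdorff spaces, Tietze extension) are standard; the only substantive work is the twin net extraction above.
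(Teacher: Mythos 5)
Your argument is correct and is essentially the paper's own proof: the same function $g(p,q)=g_1(p)-g_2(q)$ with $g_i(x_i^n)=1/n$, the same observation that $(x_1,x_2)$ lies in the closure of both triangular regions $\{n<m\}$ and $\{n>m\}$, and the same reduction through Lemma~\ref{equiv-F} and Theorem~\ref{non-F}. The only cosmetic difference is that you spell out the net extraction that the paper compresses into two closure statements; like the paper, you leave implicit that the sequence should be taken relatively discrete (with pairwise disjoint neighborhoods) so that $g_i$ can be built from bump functions, rather than by a literal application of Tietze to the not-necessarily-closed set $\{x_i^n\}$.
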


As a consequence, the space $C(\beta \mathbb{N}\times \beta\mathbb{N})$ fails the BFPP in contrast to  $C(\beta\mathbb{N})$. Since $C(K_1\times K_2)$ is isometric to $C(K_1, C(K_2))$, the space of all continuous functions from $K_1$ to $C(K_2)$, we deduce that $C(\beta\mathbb{N}, \ell_\infty)$ is not an injective Banach space or, equivalently, it fails the hyperconvexity property as a metric space. \\

\begin{example}

If $p_1\neq p_2$ are points of $\beta\mathbb{N}\setminus \mathbb{N}$, the closed  subspace $$
	X=\{g\in C(\beta\mathbb{N}): g(p_1)=g(p_2)\}
 $$  fails to have the BFPP.

 We can deduce this from Theorem~\ref{non-F} and \cite[Example 6.9.8]{Lau}. As $X$ is isometric to $C(K)$ where 
	$K$ is the quotient space of $\beta\mathbb{N}$ obtained by identifying the points $p_1$ and $p_2$, it is sufficient to notice that $K$ is not an $F$-space.  Let $A_1$, $A_2$ be two disjoint clopen subsets of $\beta\mathbb{N}$ such that $p_i\in A_i$. We can define a continuous function $g\in C(\beta\mathbb{N})$ such that $g(n)=(-1)^i /n$ for $n\in A_i\cap \mathbb{N}$. This can be viewed as a continuous function $\hat{g}\in C(K)$ because $g(p_1)=g(p_2)=0$. However, if a continuous $f$ satisfies $g = f|g|$, then $f(p_1)=-1$ and $f(p_2)=1$.

\end{example}

As we have mentioned, every extremally disconnected compact space $K$ is an $F$-space. This is because if $g\in C(K)$, then $\overline{\{x : g(x)<0\}}$ and 
 $\overline{\{x : g(x)>0\}}$ are disjoint clopen sets, so one can define $f$ that takes value $-1$ on the first set, $1$ on the second set and $0$ elsewhere. One key difference, however, between extremally disconnected and $F$-spaces is that, by virtue of Tietze's extension theorem, a closed subspace of a compact $F$-space is also an $F$-space. The \v{C}ech-Stone remainder $\mathbb{N}^* = \beta\mathbb{N}\setminus \mathbb{N}$ is the most natural example of a compact $F$-space that is not extremally disconnected, and so it is not covered  by the necessary nor by the sufficient condition for BFPP that we have so far.

\section{Compact $F$-spaces for which $C(K)$ fails the  BFPP}\label{sectionExamples}

In this section we will show that being a compact $F$-space is not a sufficient condition for the BFPP. In order to do that we will introduce a family of Banach spaces, initially formed by bounded functions and depending   on two infinite cardinals, which all fail the BFPP. Later on, we will prove that these Banach spaces can isometrically be  identified with $C(K)$-spaces with $K$ being an $F$-space. \\

As it is usual in set theory, cardinals are special kind of ordinals and an ordinal $\alpha$ is the same as the set of all ordinals below $\alpha$.

Let $\kappa<\Gamma$ be two infinite cardinals. We denote by $\ell_\infty(\Gamma)$ the Banach space of all bounded functions $f\colon \Gamma\To \mathbb{R}$ endowed with the supremum norm $\|f\|_\infty = \sup_{\gamma\in \Gamma}|f(\gamma)|$, and by $[\Gamma]^\kappa$ the family of all subsets of $\Gamma$ of cardinality $\kappa$. Define
\begin{eqnarray*} X^{\kappa,\Gamma} &:=& \{f\in\ell_\infty(\Gamma) : \exists A\in [\Gamma]^\kappa \ \ f|_{\Gamma\setminus A} \text{ is constant.}\}
  \end{eqnarray*} 
 $X^{\kappa,\Gamma}$  is a  linear subspace of $\ell_\infty(\Gamma)$ which is also closed: If $f_n$ is constant equal to $c_n$ on $\Gamma\setminus A_n$, and $\lim \|f_n - f\|_\infty=0$, then taking $A=\bigcup_n A_n$, each $f_n$ is constant equal to $c_n$ on $\Gamma\setminus A$, so $f$ is constant equal to $\lim_n c_n$ on $\Gamma\setminus A$. So $X^{\kappa,\Gamma}$ is a Banach space. We next show that:

 \begin{theorem}\label{XnotBFPP}
	The space $X^{\kappa,\Gamma}$ fails the BFPP.
\end{theorem}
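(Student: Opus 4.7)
\smallskip

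The plan is to build an explicit fixed point free nonexpansive self-map of $B := B_{X^{\kappa,\Gamma}}$ by a transfinite analogue of the classical shift construction on $c$. Because $\kappa<\Gamma$, the successor cardinal $\kappa^+$ satisfies $\kappa^+\leq\Gamma$, so I can pick disjoint $S_+,S_-\subset\Gamma$ with $|S_\pm|=\kappa^+$ and fix enumerations $S_\pm=\{a^\pm_\alpha:\alpha<\kappa^+\}$. For $f\in X^{\kappa,\Gamma}$, write $c_f$ for the unique constant value that $f$ takes off some set of size $\leq\kappa$; note that $|c_f-c_{f'}|\leq\|f-f'\|_\infty$, since the intersection of the two cocardinality-$\kappa$ sets where $f=c_f$ and $f'=c_{f'}$ is nonempty.

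Define $T:X^{\kappa,\Gamma}\to X^{\kappa,\Gamma}$ by: $T(f)(\gamma)=c_f$ for $\gamma\in\Gamma\setminus(S_+\cup S_-)$; on $S_+$ set $T(f)(a^+_0)=1$, $T(f)(a^+_{\alpha+1})=f(a^+_\alpha)$, and $T(f)(a^+_\lambda)=\liminf_{\alpha<\lambda}f(a^+_\alpha)$ for limit $\lambda<\kappa^+$; and on $S_-$ set $T(f)(a^-_0)=-1$, $T(f)(a^-_{\alpha+1})=f(a^-_\alpha)$, and $T(f)(a^-_\lambda)=\limsup_{\alpha<\lambda}f(a^-_\alpha)$. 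I then need to verify four things: (i) $T$ maps $X^{\kappa,\Gamma}$ into itself; (ii) $T(B)\subseteq B$, which is immediate since every assigned value lies in $[-1,1]$; (iii) $T$ is nonexpansive, which follows from the bound above on $c_f$, the pointwise nature of the successor step, and the 1-Lipschitz property of $\liminf$ and $\limsup$ with respect to $\|\cdot\|_\infty$; and (iv) $T$ has no fixed point. For (iv), if $T(f)=f$ then a straightforward transfinite induction on $\alpha<\kappa^+$ gives $f(a^+_\alpha)=1$ (the limit step reads $f(a^+_\lambda)=\liminf 1=1$), so $f=1$ on $S_+$, forcing $c_f=1$; symmetrically $f=-1$ on $S_-$, forcing $c_f=-1$, a contradiction.

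The main obstacle is step (i): preserving $X^{\kappa,\Gamma}$ while still building enough rigidity to rule out fixed points. The resolution is the regularity of $\kappa^+$. Given $f\in X^{\kappa,\Gamma}$ with exceptional set $A_f$ of cardinality $\leq\kappa$, the set $B_+:=\{\alpha<\kappa^+:a^+_\alpha\in A_f\}$ has size $\leq\kappa<\mathrm{cof}(\kappa^+)$, hence is bounded: $B_+\subseteq[0,\beta)$ for some $\beta<\kappa^+$. For every limit $\lambda$ with $\beta<\lambda<\kappa^+$, the tail $\{f(a^+_\alpha):\beta\leq\alpha<\lambda\}$ is cofinal in $\lambda$ and identically equal to $c_f$, so $\liminf_{\alpha<\lambda}f(a^+_\alpha)=c_f$; the same is true of the successor values for $\alpha\geq\beta$. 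Thus $T(f)$ differs from $c_f$ on $S_+$ only on a set of cardinality at most $|\beta|+1\leq\kappa$, and an identical argument applies to $S_-$ with $\limsup$. Adding the single point $a^+_0$, a single point $a^-_0$, and the values on $A_f\setminus(S_+\cup S_-)$, the total exceptional set of $T(f)$ has cardinality $\leq\kappa$, so $T(f)\in X^{\kappa,\Gamma}$ with $c_{T(f)}=c_f$. This exact balance between the expressive power of the transfinite shift and the regularity-based boundedness of exceptional sets is what makes the construction work.
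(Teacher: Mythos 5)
Your proof is correct, but it follows a genuinely different route from the paper's. The paper runs a single two-position shift on all of $\Gamma$: it places $1,-1$ at positions $0,1$, shifts every value two steps forward, and fills each limit ordinal $\beta$ and its successor with $\inf_{\alpha<\beta}\sup_{\alpha<\gamma<\beta}f(\gamma)$ and $\sup_{\alpha<\beta}\inf_{\alpha<\gamma<\beta}f(\gamma)$; invariance of $X^{\kappa,\Gamma}$ is obtained not from regularity but from the cardinality bound $|\overline{A}|\leq|A|$ for $\overline{A}=\{\sup(B):B\subseteq A\}$ (via the injection $\gamma\mapsto\min\{\alpha\in A:\alpha>\gamma\}$), and the contradiction comes from the minimal limit ordinal at which the alternating pattern $(1,-1)$ fails. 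You instead run two independent one-position shifts on disjoint copies of $\kappa^+$, propagating $1$ on $S_+$ and $-1$ on $S_-$, send everything else to the eventual constant $c_f$, and use the regularity of $\kappa^+$ to bound the exceptional set; the contradiction is then the cleaner observation that $c_f$ would have to equal both $1$ and $-1$ since $|S_\pm|=\kappa^+>\kappa$. Your fixed-point step is arguably simpler (a direct transfinite induction rather than a minimal-counterexample argument), and your use of $f\mapsto c_f$ being $1$-Lipschitz is a nice touch; what the paper's version buys is that its map is defined on all of $\ell_\infty(\Gamma)$ by the ordinal structure alone, is an isometry (yours collapses values off $S_+\cup S_-$ to $c_f$, so it is not), and visibly preserves the subspace $X_0^{\kappa,\Gamma}$, which the authors exploit later for Theorem~\ref{X0notBFPP}. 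All four verifications in your outline go through as claimed.
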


\begin{proof}
	 For $f\in \ell_\infty(\Gamma)$, we define $Tf\in \ell_\infty(\Gamma) $  as follows:
	 \begin{itemize}
	 \item  $Tf(0) = 1$, $Tf(1)=-1$.
	 \item For an ordinal of the form $\alpha+2$ , $Tf(\alpha+2) = f(\alpha)$.
	 \item For a limit ordinal $\beta$, \begin{eqnarray*}Tf(\beta) &=& \inf_{\alpha<\beta}\sup_{\alpha<\gamma<\beta} f(\gamma)
	 \end{eqnarray*}
 \item For a successor of a limit ordinal $\beta+1$,
 $$	Tf(\beta+1) = \sup_{\alpha<\beta}\inf_{\alpha<\gamma<\beta} f(\gamma)$$
	 \end{itemize} 
 
Notice that  $Tf$ is a shift forward where  $1$ and $-1$ are placed in the first two spots and
the values of $f$ have been moved two positions to the right; the rest of pairs of vacant places are filled with  either the limit superior or inferior of the preceding values. \\

The first remark is that $T(X^{\kappa,\Gamma})\subseteq X^{\kappa,\Gamma}$. To see this,
given a set $A\subset \Gamma$, let $\overline{A} = \{\sup(B) : B\subseteq A\}$. If $f$ is constant equal to $c$ outside of $A$, then $Tf$ is constant equal to $c$ out of $\{\alpha+1,\alpha+2: \alpha\in A\}\cup \{\beta,\beta+1 : \beta\in \overline{A}\}$, so we only need to notice that $|\overline{A}|\leq |A|$. This is because there is an injective mapping $h:\overline{A}\setminus \{\sup(\overline{A})\}\To A$ given by $h(\gamma) = \min\{\alpha\in A : \alpha >\gamma\}$.

 Moreover, $T(B_{X^{\kappa,\Gamma}})\subseteq B_{X^{\kappa,\Gamma}}$ because if $f$ takes values in $[-1,1]$, so will  $Tf$.

 The third fact that we need is that $T\colon B_{X^{\kappa,\Gamma}}\To B_{X^{\kappa,\Gamma}}$ is nonexpansive,  that is, that $|Tf(\alpha)-Tf'(\alpha)|\leq \|f-f'\|_\infty$ for any $f,f'\in  B_{X^{\kappa,\Gamma}}$ and $\alpha\in \Gamma$. Only the case of a limit  ordinal $\beta$ and its successor $\beta+1$ needs to be checked. Remember that if $***$ stands for either $\inf$ or $\sup$, and $\{r_i : i\in I\}$ and $\{s_i :i\in I\}$ are families of real numbers, then 
$$\left|\underset{i\in I}{***}\ r_i - \underset{i\in I}{***}\ s_i \right| \leq \sup_{i\in I}|s_i-r_i|, $$
so \begin{eqnarray*}|Tf(\beta) - Tf'(\beta)|& = & \left|\inf_{\alpha<\beta}\sup_{\alpha<\gamma<\beta} f(\gamma) - \inf_{\alpha<\beta}\sup_{\alpha<\gamma<\beta} f'(\gamma)\right|\\ &\leq& 
\sup_{\alpha<\beta} \left|\sup_{\alpha<\gamma<\beta} f(\gamma) - \sup_{\alpha<\gamma<\beta} f'(\gamma)\right|\\ &\leq& 
\sup_{\alpha<\beta} \sup_{\alpha<\gamma<\beta} \left| f(\gamma) - f'(\gamma)\right| \leq \|f-f'\|_\infty.
\end{eqnarray*}
Analogously, $|Tf(\beta+1) - Tf'(\beta+1)|\le \V f-f'\V_\infty$.\\

 Finally assume that we have a fixed point $Tf = f\in B_{X^{\kappa,\Gamma}}$. Consider the set of ordinals $$M = \{\text{ limit ordinals } \beta<\Gamma: (f(\beta), f(\beta+1)) \neq (1,-1)\}.$$ Since $f$ is constant out of a set of ordinals of cardinality $\kappa$, the set $M$ is nonempty, so $M$ has a minimum $\mu = \min(M)$. By minimality $f(\beta)=1$ and $f(\beta+1)=-1$ for all limit ordinals $\beta<\mu$. For $\beta=0$, remember that $f(0)=Tf(0)=1$ and $f(1) = Tf(1)=-1$, so $0\not\in M$. Moreover, by the formula $f(\alpha+2) = Tf(\alpha+2) = f(\alpha)$ we have that $f(\beta+2n)=1$ and $f(\beta+2n+1)=-1$ for every limit ordinal $\beta<\mu$ and every natural number $n$. But then,
 \begin{eqnarray*} f(\mu) = Tf(\mu) &=& \inf_{\alpha<\mu}\sup_{\alpha<\gamma<\mu} f(\gamma) = 1 \\
 	f(\mu+1) = Tf(\mu+1) &=& \sup_{\alpha<\mu}\inf_{\alpha<\gamma<\mu} f(\gamma) = -1
 \end{eqnarray*}
This means that $\mu\not\in M$, a contradiction.
\end{proof}

 \begin{lemma}
For  $\kappa<\Gamma$ two infinite cardinals, the Banach space $X^{\kappa,\Gamma}$ is isometric to a space $C(K^{\kappa,\Gamma})$, where $K^{\kappa,\Gamma}$ is a compact $F$-space. 
 \end{lemma}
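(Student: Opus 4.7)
The plan is to realize $X^{\kappa,\Gamma}$ as a closed unital subalgebra of the commutative $C^{*}$-algebra $\ell_\infty(\Gamma)=C(\beta\Gamma_d)$ (where $\Gamma_d$ denotes $\Gamma$ equipped with the discrete topology) and then to obtain $K^{\kappa,\Gamma}$ via the Gelfand--Stone--Weierstrass representation.

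First I would verify that $X^{\kappa,\Gamma}$ is a closed unital subalgebra. Norm-closure was already observed by the authors just before Theorem~\ref{XnotBFPP}, and closure under products is immediate: if $f$ is constant $c$ off $A\in[\Gamma]^\kappa$ and $g$ is constant $d$ off $B\in[\Gamma]^\kappa$, then $fg$ is constant $cd$ off $A\cup B$, which still has cardinality at most $\kappa$ since $\kappa$ is infinite. Every closed unital real subalgebra of $C(X)$ for compact Hausdorff $X$ is automatically a sublattice (via $|h|=\sqrt{h^{2}}$, approximable uniformly by polynomials), so by Stone--Weierstrass it is of the form $C(K)$ for a unique compact Hausdorff $K$. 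Explicitly, I would let $K^{\kappa,\Gamma}$ be the quotient of $\beta\Gamma_d$ by the equivalence relation $p\sim q \iff \hat{f}(p)=\hat{f}(q)$ for every $f\in X^{\kappa,\Gamma}$ (with $\hat f$ the continuous extension of $f$ to $\beta\Gamma_d$); this yields an isometric algebra isomorphism $X^{\kappa,\Gamma}\cong C(K^{\kappa,\Gamma})$.

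Next I would verify the $F$-space property by checking the criterion of Lemma~\ref{equiv-F}(iii). Because the Gelfand isomorphism preserves products, absolute values and the sup norm, it suffices, for each $g\in X^{\kappa,\Gamma}$, to exhibit $f\in X^{\kappa,\Gamma}$ with $\|f\|_\infty\le 1$ and $g\cdot f=|g|$ pointwise on $\Gamma$. Assuming $g$ is constant equal to $c$ off some $A\in[\Gamma]^\kappa$, define
$$f(\gamma)=\begin{cases}\operatorname{sgn}\bigl(g(\gamma)\bigr) & \text{if } g(\gamma)\ne 0,\\ 0 & \text{if } g(\gamma)=0.\end{cases}$$
Then $\|f\|_\infty\le 1$, $gf=|g|$ pointwise, and $f$ is constant equal to $\operatorname{sgn}(c)$ (respectively $0$ when $c=0$) off $A$, so $f\in X^{\kappa,\Gamma}$.

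The argument is essentially routine; the only conceptual point is to recognize that the $F$-space condition, being purely $C^{*}$-algebraic in nature, transfers automatically through the Gelfand isomorphism, so one never needs an explicit topological description of $K^{\kappa,\Gamma}$ to carry out the verification.
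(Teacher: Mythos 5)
Your proposal is correct and follows essentially the same route as the paper: the authors invoke Arens' characterization of real Banach algebras of continuous functions where you invoke the Gelfand/Stone--Weierstrass representation as a quotient of $\beta\Gamma_d$, but these are the same representation theorem in different clothing, and your $F$-space verification via the pointwise sign function, together with the observation that the isomorphism preserves absolute values, is exactly the paper's argument. No gaps.
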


\begin{proof}
Notice that $X^{\kappa,\Gamma}$ contains the constant functions and is closed under multiplication, so we invoke Arens' characterization of real Banach algebras of continuous functions \cite[p.~281]{Arens}, and we conclude that there is a linear onto isometry that also preserves pointwise multiplication between $X^{\kappa,\Gamma}$ and $C(K^{\kappa,\Gamma})$ for some compact space $K^{\kappa,\Gamma}$. This isometry also preserves pointwise order because $f\leq g$ if and only if $g-f=h^2$ for some $h$, and therefore it also preserves the absolute value $|f| = \max(-f,f)$. The compact space $K^{\kappa,\Gamma}$ is an $F$-space because if $g\in X^{\kappa,\Gamma}$ and we define $f\in \ell_\infty(\Gamma)$ by $$f(\alpha) = \begin{cases}
1 &\text{ if } g(\alpha)>0,\\
0 &\text{ if } g(\alpha)=0,\\
-1 &\text{ if } g(\alpha)<0,\\
\end{cases}$$
then $f\in X^{\kappa,\Gamma}$ and $g=f\cdot |g|$.\\

\end{proof}

We do not enter into details here, but a standard argument with Boolean algebras shows that $K^{\kappa,\Gamma}$ is the Stone space of the Boolean algebra of subsets $A$ of $\Gamma$ such that $\min(|A|,|\Gamma\setminus A|)\leq \kappa$. 
Another description is that $K^{\kappa,\Gamma}$ is the result of glueing into a single point all ultrafilters of $\beta\Gamma$ that do not contain any subset of cardinality at most $\kappa$. \\

At this point, what we know is that the class of compact spaces $K$ for which $C(K)$ enjoys the BFPP is strictly contained in the class of compact $F$-spaces and contains at least all extremally disconnected spaces.  There is a characterization of $F$-spaces in the spirit similar to that in Theorem~\ref{Zippin} that $K$ is extremally disconnected if and only if $C(K)$ is order-complete: in fact,  $K$ is an $F$-space if and only if whenever two countable sets $A,B\subset C(K)$ satisfy $a\leq b$ for all $a\in A$, $b\in B$ there exists $f\in C(K)$ such that $a\leq f\leq b$ for all $a\in A$, $b\in B$, see for instance \cite[Theorem 5.2]{aleph}  or  \cite[Theorem 2.3.4]{Lau}.
The fact that extremally disconnected spaces are $F$-spaces can be seen in this way because if the supremum $f=\sup(A)$ exists, it would do the job. It is natural to wonder whether some completeness property weaker than full order-completeness but stronger than the countable separation property of $F$-spaces could still be sufficient for BFPP. However, our example shows the following:

\begin{theorem}
	For every infinite cardinals $\kappa<\Gamma$, $C(K^{\kappa,\Gamma})$ is $\kappa$-complete, in the sense that every bounded subset of cardinality at most $\kappa$ has a supremum in $C(K^{\kappa,\Gamma})$. Yet, $C(K^{\kappa,\Gamma})$ fails the BFPP.
\end{theorem}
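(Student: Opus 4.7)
The plan is to reduce $\kappa$-completeness to a direct computation inside $X^{\kappa,\Gamma}$, using that the Arens isometry $\Phi\colon X^{\kappa,\Gamma}\to C(K^{\kappa,\Gamma})$ from the previous lemma preserves the pointwise order (the characterization $f\leq g \iff g-f = h^2$ is purely algebraic, hence transported by $\Phi$). The BFPP part is then already done by Theorem~\ref{XnotBFPP}: any fixed-point-free nonexpansive selfmap of $B_{X^{\kappa,\Gamma}}$ is conjugated by $\Phi$ into a fixed-point-free nonexpansive selfmap of $B_{C(K^{\kappa,\Gamma})}$. Only the completeness part requires work.

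Concretely, I would let $F\subseteq X^{\kappa,\Gamma}$ be bounded with $|F|\leq\kappa$. For each $f\in F$ pick $A_f\subseteq\Gamma$ of cardinality at most $\kappa$ such that $f$ is constant equal to some $c_f$ on $\Gamma\setminus A_f$, and put $A=\bigcup_{f\in F}A_f$. The cardinal arithmetic gives $|A|\leq \kappa\cdot\kappa=\kappa$ because $\kappa$ is infinite. Now define $g\in \ell_\infty(\Gamma)$ by $g(\alpha)=\sup_{f\in F}f(\alpha)$, which is well-defined by the uniform bound on $F$. On $\Gamma\setminus A$ the function $g$ takes the constant value $\sup_{f\in F}c_f$, so $g\in X^{\kappa,\Gamma}$.

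Finally I would observe that $g$ is the least upper bound of $F$ in the pointwise order on $X^{\kappa,\Gamma}$: it is clearly a pointwise upper bound, and any other pointwise upper bound $h\in X^{\kappa,\Gamma}$ trivially satisfies $h(\alpha)\geq g(\alpha)$ for every $\alpha\in\Gamma$. Applying $\Phi$, the element $\Phi(g)\in C(K^{\kappa,\Gamma})$ is the supremum of $\Phi(F)$, so $C(K^{\kappa,\Gamma})$ is $\kappa$-complete.

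I do not expect a substantial obstacle: the argument is essentially cardinal bookkeeping plus the trivial fact that uniformly bounded families admit pointwise suprema. The only mildly delicate point is that $\Phi$ carries suprema to suprema, but this is immediate from the order preservation established in the preceding lemma, so no real difficulty arises.
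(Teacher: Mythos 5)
Your proposal is correct and follows essentially the same route as the paper: the BFPP failure is quoted from Theorem~\ref{XnotBFPP}, and $\kappa$-completeness is verified by taking the pointwise supremum, noting it is constant off the union $\bigcup_f A_f$, which has cardinality at most $\kappa\cdot\kappa=\kappa$. The only difference is that you make explicit the (correct, and implicitly used in the paper) point that the order-preserving Arens isometry transports suprema from $X^{\kappa,\Gamma}$ to $C(K^{\kappa,\Gamma})$.
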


\begin{proof}
We already checked that $X^{\kappa,\Gamma}  \cong C(K^{\kappa,\Gamma})$ fails BFPP in Theorem~\ref{XnotBFPP}. On the other hand, $X^{\kappa,\Gamma}$ is clearly $\kappa$-complete. Namely, if $f_i$ is constant equal to $c_i$ out of a set $A_i$ , then the pointwise supremum $\sup_{i\in I} f_i$ is constant equal to $\sup_{i\in I} c_i$ out of $\bigcup_{i\in I}A_i$. And the union of at most $\kappa$ many sets of cardinality $\kappa$ has cardinality $\kappa$.
\end{proof}

The property that $C(K)$ is $\kappa$-complete is equivalent to saying that $\overline{W}$ is open whenever $W$ is an open set that is the union of at most $\kappa$ many closed sets. When $\kappa=\aleph_0$, these compact spaces are called basically disconnected. In a similar way, hyperconvexity in Theorem \ref{hyper} cannot be relaxed by a cardinality restriction, because if $C(K)$ is $\kappa$-complete, then the intersection of at most $\kappa$ many mutually intersecting closed balls is nonempty. This is because a closed ball of center $f$ and radius $r$ in $C(K)$ can be viewed as the interval $[f-r,f+r]$, so a nonempty intersection $[f-r,f+r]\cap [g-s,g+s] \neq \emptyset$ translates into $f-r\leq g+s$ and $g-s\leq f+r$, and if $\{[f_i-r_i,f_i+r_i]\}$ are mutually intersecting balls, $\sup_{i\in I}\{f_i-r_i\}$ belongs to the intersection.\\

The reader may have noticed that the construction of the fixed point free mapping given in Theorem \ref{XnotBFPP} can be seen as a transfinite modification of the classical example of  the two-position shift   that shows that the separable Banach space $c= C(\mathbb{N}\cup\{\infty\})$ fails the BFPP. 
Another classical example  within the separable scope is $c_0$, the Banach space of all null convergent sequences or, equivalently, all continuous functions in $C(\mathbb{N}\cup\{\infty\})$ vanishing at infinity. In this case, the single shift $T\colon B_{c_0}\to B_{c_0}$ given by $T(x_1,x_2,\cdots)=(1,x_1,x_2,\cdots)$ is a fixed point free isometry.\\
A transfinite modification of this example can be given by the mapping $S\colon X^{\kappa,\Gamma} \longrightarrow X^{\kappa,\Gamma}$ defined as follows:
\begin{itemize}
\item  $Sf(0) = 1$,
\item For a successor ordinal $\alpha+1$, $Sf(\alpha+1) = f(\alpha)$,
\item For a limit ordinal $\beta$, $$Sf(\beta) = \inf_{\alpha<\beta}\sup_{\alpha<\gamma<\beta} f(\gamma).$$ 		
\end{itemize}
Let $X_0^{\kappa,\Gamma}$ denote the subspace of $X^{\kappa,\Gamma}$ consisting of those functions vanishing outside a set of cardinality $\kappa$, i.e.~  
\begin{eqnarray*} X_0^{\kappa,\Gamma} &:=& \{f\in\ell_\infty(\Gamma) : \exists A\in [\Gamma]^\kappa \ \ f|_{\Gamma\setminus A} \text{ is null}\}.
\end{eqnarray*} 

The argument in the proof of Theorem \ref{XnotBFPP} shows that $S$ is a fixed point free mapping on $X_0^{\kappa,\Gamma}$. Nevertheless, we do not repeat the details here since the mapping $T$ constructed in the proof of Theorem \ref{XnotBFPP} trivially satisfies $TX_0^{\kappa,\Gamma} \subseteq X_0^{\kappa,\Gamma}$. Therefore, in any case we obtain the following conclusion:

\begin{theorem}\label{X0notBFPP}
	The space $X_0^{\kappa,\Gamma}$  fails the BFPP.
\end{theorem}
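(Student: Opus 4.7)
The plan is to reuse the operator $T\colon B_{X^{\kappa,\Gamma}}\To B_{X^{\kappa,\Gamma}}$ constructed in the proof of Theorem~\ref{XnotBFPP} and simply verify that it restricts to a fixed point free nonexpansive self-map of $B_{X_0^{\kappa,\Gamma}}$. Once this invariance holds, nonexpansiveness and ball-preservation are inherited automatically from the ambient space $X^{\kappa,\Gamma}$, and fixed point freeness is also inherited because any fixed point of the restriction would be a fixed point of $T$ on $B_{X^{\kappa,\Gamma}}$, contradicting Theorem~\ref{XnotBFPP}. So the whole proof reduces to one invariance check.

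The key step is to show that $T(X_0^{\kappa,\Gamma})\subseteq X_0^{\kappa,\Gamma}$. I would start from an $f\in X_0^{\kappa,\Gamma}$ that vanishes outside some $A\in[\Gamma]^{\kappa}$, and mimic the constant-out-of-$A$ analysis done in the proof of Theorem~\ref{XnotBFPP}: by the defining recursion for $T$, the value of $Tf$ at any ordinal that is not of the form $\alpha+1$, $\alpha+2$ (for $\alpha\in A$) or $\beta$, $\beta+1$ (for a limit ordinal $\beta\in \overline{A}=\{\sup(B):B\subseteq A\}$) equals the constant value of $f$ outside $A$, namely $0$. The only extra points at which $Tf$ is possibly nonzero are $0$ and $1$, where $Tf(0)=1$ and $Tf(1)=-1$. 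Thus $Tf$ vanishes outside the set
\[
\{0,1\}\cup\{\alpha+1,\alpha+2:\alpha\in A\}\cup\{\beta,\beta+1:\beta\in\overline{A}\},
\]
which, since $|\overline{A}|\leq|A|=\kappa$ and $\kappa$ is infinite, has cardinality $\kappa$. Hence $Tf\in X_0^{\kappa,\Gamma}$.

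With the invariance established, everything else is immediate: since $B_{X_0^{\kappa,\Gamma}}\subseteq B_{X^{\kappa,\Gamma}}$, the restriction $T\colon B_{X_0^{\kappa,\Gamma}}\To B_{X_0^{\kappa,\Gamma}}$ is well defined, nonexpansive and maps the closed unit ball to itself, inheriting these properties directly from the ambient proof. If $f\in B_{X_0^{\kappa,\Gamma}}$ were a fixed point of $T$, then $f\in B_{X^{\kappa,\Gamma}}$ would contradict the fixed point free conclusion of Theorem~\ref{XnotBFPP}. Therefore $X_0^{\kappa,\Gamma}$ fails the BFPP.

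There is really no substantive obstacle here; the only thing one has to be mildly careful about is bookkeeping the two "spurious" coordinates $0$ and $1$ that $T$ introduces regardless of $f$, but these are absorbed harmlessly into the exceptional set once $\kappa$ is infinite. This is exactly why the authors note that the inclusion $TX_0^{\kappa,\Gamma}\subseteq X_0^{\kappa,\Gamma}$ holds \emph{trivially}, making a separate treatment of the shift $S$ (the transfinite analogue of the $c_0$-shift) unnecessary.
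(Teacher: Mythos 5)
Your proposal is correct and is essentially the paper's own argument: the authors likewise dispense with a separate proof by observing that the map $T$ from Theorem~\ref{XnotBFPP} satisfies $T(X_0^{\kappa,\Gamma})\subseteq X_0^{\kappa,\Gamma}$, so its restriction is a fixed point free nonexpansive self-map of $B_{X_0^{\kappa,\Gamma}}$. Your explicit verification of the invariance (including absorbing the coordinates $0$ and $1$ into the exceptional set) just fills in the detail the paper calls trivial.
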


Notice that the nonexpansive mappings without fixed points that we have presented in this section are actually isometries. \medskip

\bigskip

Let us finish this section with an observation about the space $X_0^{\kappa,\Gamma}$.
By Arens' theorem we know that there is a multiplication and order preserving linear isometry $f\mapsto \hat{f}$ from $X^{\kappa,\Gamma}$ to $C(K^{\kappa,\Gamma})$. Consider the following family of closed subsets of $K^{\kappa,\Gamma}$: $\left\{\hat{f}^{-1}(0) : f\in X^{\kappa,\Gamma}_0\right\}$. This family has the finite intersection property because $c\cdot 1 \not\leq f^2_1+\cdots+f^2_n$ for any $c\in \mathbb{R}^+$, $f_1,\ldots,f_n\in X^{\kappa,\Gamma}_0$. By compactness, there is a point $p^{\kappa,\Gamma}\in K^{\kappa,\Gamma}$ where $\hat{f}$ vanishes for all $f\in  X^{\kappa,\Gamma}_0$.  Define $C(K^{\kappa, \Gamma},p^{\kappa,\Gamma})=\{g\in C(K^{\kappa, \Gamma}): g(p^{\kappa,\Gamma})=0\}$.

The above means that $$\{\hat{f} : f\in X^{\kappa,\Gamma}_0\} \subseteq C(K^{\kappa, \Gamma},p^{\kappa,\Gamma}),$$
 but since both are subspaces of codimension 1  in $C(K^{\kappa,\Gamma})$, we conclude that
 $$\{\hat{f} : f\in X^{\kappa,\Gamma}_0\} = C(K^{\kappa,\Gamma},p^{\kappa,\Gamma})$$
 and $X^{\kappa,\Gamma}_0$ is isometric to  $C(K^{\kappa,\Gamma},p^{\kappa,\Gamma})$.\\

\section{Under CH, $C(\mathbb{N}^*)$ fails the BFPP}\label{remainder}

We now have a family of examples of compact $F$-spaces  for which the BFPP fails. But what about \emph{the most prominent example} of compact $F$-space that is not extremally disconnected, that is, $\mathbb{N}^* = \beta\mathbb{N}\setminus\mathbb{N}$, the \v{C}ech-Stone remainder of the natural numbers. We show now that $C(\mathbb{N}^*)$ fails the BFPP under the Continuum Hypothesis (CH). We do not know if this is true in ZFC or if there are models of set theory where $C(\mathbb{N}^*)$ has the BFPP. 

\medskip

Remember that $C(\mathbb{N}^*)$ is isometric to $\ell_\infty/c_0$. The natural approach would be to exhibit an explicit nonexpansive mapping on the closed ball of $\ell_\infty/c_0$ into itself without fixed points. Nevertheless,  we have not been able to do so. Instead, we rely on the general structural properties that $\mathbb{N}^*$ has under CH, that constitute what van Mill calls its \emph{smiling friendly head} \cite{topo}, to make one of our examples from Section~\ref{sectionExamples} sit in $\mathbb{N}^*$.

\medskip

\begin{lemma}\label{retractBanach}
	Suppose that we have two nonexpansive linear operators between Banach spaces $E\colon X\To Y$ and $R\colon Y\To X$ such that $E\circ R = Id_Y$. If $X$ has the BFPP, then $Y$ has the BFPP.
\end{lemma}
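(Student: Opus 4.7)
The plan is to build a nonexpansive selfmap of $B_X$ whose fixed point is forced to be of the form $Rv$ for a fixed point $v$ of $T$. This is the standard retraction trick, and the hypothesis $E\circ R=\mathrm{Id}_Y$ is tailor-made for it.

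Fix a nonexpansive $T\colon B_Y\to B_Y$. First I would define $\widetilde T\colon B_X\to B_X$ by
$$\widetilde T(x)\;:=\;R\bigl(T(E(x))\bigr).$$
Since $E$ is linear and nonexpansive, $\|E\|\le 1$, so $E(B_X)\subseteq B_Y$; thus $T\circ E$ makes sense on $B_X$ and takes values in $B_Y$. Similarly $\|R\|\le 1$, so $R$ sends $B_Y$ into $B_X$, and the composition indeed lands in $B_X$. Nonexpansiveness is immediate from the chain
$$\|\widetilde T(x_1)-\widetilde T(x_2)\|\le \|T(E x_1)-T(E x_2)\|\le \|E x_1-E x_2\|\le \|x_1-x_2\|,$$
using in turn that $R$, $T$, and $E$ are each nonexpansive.

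By the BFPP in $X$ there exists $x^{*}\in B_X$ with $\widetilde T(x^{*})=x^{*}$, that is $R(T(Ex^{*}))=x^{*}$. Now I would apply $E$ to both sides and use the retraction identity $E\circ R=\mathrm{Id}_Y$:
$$T(Ex^{*})\;=\;E\bigl(R(T(Ex^{*}))\bigr)\;=\;E(x^{*}).$$
Setting $v:=Ex^{*}\in B_Y$ gives $T(v)=v$, so $T$ has a fixed point in $B_Y$. Since $T$ was arbitrary, $Y$ has the BFPP.

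There is essentially no obstacle here: all three composition properties (mapping $B_X$ into $B_X$, nonexpansiveness, and the pullback of the fixed point via $E\circ R=\mathrm{Id}_Y$) follow directly from the hypotheses, and linearity of $E,R$ is only used implicitly through $\|E\|,\|R\|\le 1$ (in fact, the same argument works if $E,R$ are merely nonexpansive maps, not necessarily linear, satisfying $E\circ R=\mathrm{Id}_Y$, so the lemma could even be stated more generally).
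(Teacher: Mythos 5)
Your argument is correct and is essentially identical to the paper's proof: both define $\widetilde T = R\circ T\circ E$ on $B_X$, verify it is a nonexpansive selfmap of $B_X$, and push a fixed point forward through $E$ using $E\circ R=\mathrm{Id}_Y$. (One minor caveat on your closing aside: for merely nonexpansive, nonlinear $E,R$ you would additionally need to assume they map the respective unit balls into each other, since that no longer follows from nonexpansiveness alone.)
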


\begin{proof}
	Let $T\colon B_{Y}\to B_{Y}$ be nonexpansive. We can create a new nonexpansive mapping
	$$\tilde{T} = R \circ T \circ E\colon B_{X}\To B_{X}$$
	that must have a fixed point $g\in B_X$. Then $f = Eg$ is the desired fixed point of $T$, because
	$$Tf = T(Eg) = ERT(Eg) = E\tilde{T}g = Eg = f.$$
\end{proof}

\begin{definition}\label{retractdef}
Let $K,L$ be compact topological spaces. We say that $L$ is a continuous retract of $K$ if there exist continuous functions $e\colon L\to K$ and $r\colon K\to L$ such that $r\circ e=Id_L$.
\end{definition}

\begin{lemma}\label{BFPPretract}
If $L$ is a continuous retract of $K$ and $C(K)$ has the BFPP, then $C(L)$ has the BFPP as well.

\end{lemma}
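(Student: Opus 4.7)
The plan is to reduce this directly to Lemma~\ref{retractBanach} by turning the topological retraction at the level of $K$ and $L$ into a Banach-space retraction at the level of $C(K)$ and $C(L)$, in the contravariant direction. Given the continuous maps $e\colon L\to K$ and $r\colon K\to L$ with $r\circ e=Id_L$, I would define the two pullback operators
\[
R\colon C(L)\To C(K),\quad R(f)=f\circ r,\qquad E\colon C(K)\To C(L),\quad E(g)=g\circ e.
\]
Both are well-defined since $e$ and $r$ are continuous, and both are clearly linear.

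Next I would verify that they fit exactly the hypotheses of Lemma~\ref{retractBanach} with $X=C(K)$ and $Y=C(L)$. For $R$, nonexpansiveness reduces to the norm estimate
\[
\|R(f)\|_\infty=\sup_{x\in K}|f(r(x))|\leq \sup_{y\in L}|f(y)|=\|f\|_\infty,
\]
which follows because $r(x)\in L$; the analogous inequality gives $\|E(g)\|_\infty\leq\|g\|_\infty$. Then, for any $f\in C(L)$ and any $y\in L$,
\[
(E\circ R)(f)(y)=R(f)(e(y))=f(r(e(y)))=f(y),
\]
so $E\circ R=Id_{C(L)}$.

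Finally, since by hypothesis $C(K)$ has the BFPP, Lemma~\ref{retractBanach} yields the BFPP for $C(L)$, completing the proof. There is really no obstacle here: the only point to keep in mind is orienting $E$ and $R$ correctly (the contravariant pullback reverses the direction of the retraction, so that $r$ produces the embedding $R$ of $C(L)$ into $C(K)$ and $e$ produces the left inverse $E$), which is what matches the statement of Lemma~\ref{retractBanach}.
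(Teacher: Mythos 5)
Your proof is correct and is essentially identical to the paper's: both take the pullback operators $E=e^o$ and $R=r^o$ and apply Lemma~\ref{retractBanach} with $X=C(K)$, $Y=C(L)$. You simply spell out the norm estimates and the identity $E\circ R=Id_{C(L)}$ that the paper leaves implicit.
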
 
\begin{proof}
Remember that every continuous function between compact spaces $\phi:K_1\To K_2$ induces a nonexpansive linear operator $\phi^o: C(K_2)\To C(K_1)$ acting by composition $\phi^o(f) = f\circ \phi$. If $e$ and $r$ are as in Definition~\ref{retractdef} we can apply Lemma~\ref{retractBanach} to $E=e^o$ and $R=r^o$.
\end{proof} 

The key fact is the following result in general topology, cf. Theorem 1.4.4 and Theorem 1.8.1 in \cite{topo}.

\begin{theorem}\label{retractN}
	Under CH, every zero-dimensional compact $F$-space of weight $\omega_1$ is a continuous retract of $\mathbb{N}^*$.
\end{theorem}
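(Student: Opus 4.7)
The plan is to invoke Parovi\v{c}enko's characterization theorem. Under CH, $\mathbb{N}^*$ is, up to homeomorphism, the unique zero-dimensional compact $F$-space of weight $\omega_1$ in which every nonempty $G_\delta$ has nonempty interior and disjoint cozero sets have disjoint closures. Dually, the Boolean algebra $\mathcal{P}(\mathbb{N})/\mathrm{fin}$ is countably saturated of cardinality $\omega_1$. This is the ``smiling friendly head'' structure that makes $\mathbb{N}^*$ a universal object under CH, and it is what I would exploit.

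By Stone duality, to produce a continuous retraction $r\colon \mathbb{N}^*\to L$ it suffices to construct a pair of Boolean homomorphisms $i\colon \mathbb{B}\hookrightarrow \mathcal{P}(\mathbb{N})/\mathrm{fin}$ (an embedding) and $q\colon \mathcal{P}(\mathbb{N})/\mathrm{fin}\twoheadrightarrow \mathbb{B}$ (a surjection) satisfying $q\circ i=\mathrm{Id}_{\mathbb{B}}$, where $\mathbb{B}=\mathrm{Clop}(L)$. Here $|\mathbb{B}|\leq \omega_1$ follows from zero-dimensionality and the weight hypothesis, while Lemma~\ref{equiv-F} translates the $F$-space property of $L$ into a countable interpolation property inside $\mathbb{B}$. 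I would build $i$ and $q$ simultaneously by a transfinite recursion of length $\omega_1$. Using CH, enumerate $\mathbb{B}=\{b_\alpha:\alpha<\omega_1\}$ and $\mathcal{P}(\mathbb{N})/\mathrm{fin}=\{A_\alpha:\alpha<\omega_1\}$. At stage $\alpha$, extend $i$ so that $b_\alpha$ lies in its domain, appealing to the countable saturation of $\mathcal{P}(\mathbb{N})/\mathrm{fin}$ to realize the quantifier-free type dictated by the partial data already committed; then extend $q$ so that $A_\alpha$ lies in its domain, declaring $q(A_\alpha)$ to be an element of $\mathbb{B}$ consistent with both the partial homomorphism $q$ built so far and the identity condition on the already-committed part of $i(\mathbb{B})$. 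Limit stages are handled by taking unions.

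The main obstacle is the bookkeeping needed to preserve the compatibility $q\circ i=\mathrm{Id}_{\mathbb{B}}$ throughout the recursion: each extension of $i$ constrains the permissible values of $q$ on the newly placed generators, and each extension of $q$ restricts the types $i$ may realize later. The argument goes through precisely because the countable saturation of $\mathcal{P}(\mathbb{N})/\mathrm{fin}$ under CH guarantees that any consistent finite family of constraints arising from a countable subalgebra is simultaneously realizable; combined with the countable interpolation property of $\mathbb{B}$ inherited from the $F$-space hypothesis, this leaves enough room at every stage to continue the recursion and complete both maps after $\omega_1$ steps.
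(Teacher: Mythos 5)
The first thing to say is that the paper does not prove this statement at all: it is imported verbatim from van Mill's Handbook chapter (Theorems 1.4.4 and 1.8.1 of \cite{topo}), so the benchmark is the argument in that reference, and your outline follows the same standard route. The reductions you actually carry out are correct: by Stone duality a retraction $\mathbb{N}^*\to L$ with section $L\to\mathbb{N}^*$ is equivalent to a pair of Boolean homomorphisms $i\colon \mathbb{B}\to\mathcal{P}(\mathbb{N})/\mathrm{fin}$ (injective) and $q\colon \mathcal{P}(\mathbb{N})/\mathrm{fin}\to\mathbb{B}$ with $q\circ i=\mathrm{Id}_{\mathbb{B}}$, where $\mathbb{B}=\mathrm{Clop}(L)$; zero-dimensionality and the weight hypothesis give $|\mathbb{B}|\leq\omega_1$; CH gives $|\mathcal{P}(\mathbb{N})/\mathrm{fin}|=\omega_1$, so that a recursion of length $\omega_1$ sees only countable subalgebras at every proper stage, which is what lets countable saturation of $\mathcal{P}(\mathbb{N})/\mathrm{fin}$ (a ZFC fact) and the countable interpolation property of $\mathbb{B}$ (the Boolean translation of the $F$-space condition) apply. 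Two minor remarks: your statement of Parovi\v{c}enko's characterization omits ``no isolated points'' (the disjoint union of $\mathbb{N}^*$ with a single point satisfies every condition you list), though you never actually use the characterization; and surjectivity of $q$ is automatic once $q\circ i=\mathrm{Id}_{\mathbb{B}}$ holds.

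The genuine gap is precisely the step you call ``the main obstacle'' and then dismiss in one sentence. Extending $q$ alone is unproblematic: Sikorski's criterion reduces it to interpolating a countable upward-directed set below a countable downward-directed set in $\mathbb{B}$, which is exactly what the $F$-space hypothesis provides. The delicate step is the other one: when you insert a new element $b_\alpha$ of $\mathbb{B}$ via $i$, the element $A=i(b_\alpha)$ must realize not only the order constraints $i(x)\leq A\leq i(y)$ (for $x\leq b_\alpha\leq y$ in the current domain of $i$) plus the nonmeet conditions needed for injectivity, but also countably many \emph{negative} constraints imposed by the partial $q$, of the form $X\wedge A^{c}\neq 0$ whenever $X\in\mathrm{dom}(q)$ with $q(X)\not\leq b_\alpha$, and $A\wedge Y^{c}\neq 0$ whenever $b_\alpha\not\leq q(Y)$; without these, setting $q(A)=b_\alpha$ at the next step is inconsistent and the identity $q\circ i=\mathrm{Id}_{\mathbb{B}}$ cannot be maintained. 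Countable saturation realizes this combined type only if it is finitely satisfiable, and verifying finite satisfiability is the one non-routine computation in the whole proof: it is where the inductive invariants and the interpolation property of $\mathbb{B}$ must be used together, and it is the only place the $F$-space hypothesis can enter in an essential way. That it must enter somewhere is forced: a retract of $\mathbb{N}^*$ is homeomorphic to a closed subspace of $\mathbb{N}^*$ and hence is automatically an $F$-space, so the theorem is false for general zero-dimensional compacta of weight $\omega_1$, and a proof whose only use of the hypothesis is the assertion that it ``leaves enough room at every stage'' has not yet proved anything. You should either exhibit the type and check its finite satisfiability explicitly, or cite the precise result of van Mill/Negrepontis that packages this recursion.
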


The weight of a topological space is, by definition, the least cardinality of a basis of open sets. What we will use is that the weight of an infinite compact space $K$ equals the least cardinality of a dense subset of the Banach space $C(K)$, cf. \cite[Proposition 7.6.5]{S}. A compact space $K$ is called zero-dimensional if for every two different points $x,y$ there is a clopen set $A$ such that $x\in A$, $y\not\in A$. In this case, in fact, for every two disjoint closed sets $F,G$ there is a clopen set $A$ with $F\subseteq A$ and $G\subseteq K\setminus A$ (see  \cite[Proposition 8.2.2]{S}). We can translate this fact into the property that for every $f\in C(K)$ there exists $g\in C(K)$ with $g^2=1$ and such that $f\cdot g>-1$ (here $g$ gives a clopen set separating the closed sets $\{f\geq 1\}$ and $\{f\leq -1\}$). This property holds in $X^{\kappa,\Gamma}$. Indeed, given $f\in X^{\kappa,\Gamma}$, we can just take $g$ to take value 1 where $f$ is nonnegative and value $-1$ where $f$ is negative.  
Thus, every  compact set $K^{\kappa,\Gamma}$ is zero-dimensional. 

\begin{theorem}\label{CH}
	Under CH, $C(\mathbb{N}^*) = \ell_\infty/c_0$ fails the BFPP.
\end{theorem}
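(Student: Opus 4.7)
The plan is to transfer the BFPP failure from one of the compact spaces built in Section~\ref{sectionExamples} to $\mathbb{N}^*$ via Lemma~\ref{BFPPretract}. Concretely, I would set $K := K^{\aleph_0,\omega_1}$: by Theorem~\ref{XnotBFPP} the space $C(K)\cong X^{\aleph_0,\omega_1}$ already fails the BFPP, so it suffices to show that $K$ is a continuous retract of $\mathbb{N}^*$, and this will be supplied by Theorem~\ref{retractN} once we check that $K$ is a zero-dimensional compact $F$-space of weight $\omega_1$.

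Two of the three hypotheses of Theorem~\ref{retractN} are essentially free. That $K$ is an $F$-space was proved in the preceding lemma. Zero-dimensionality was implicitly observed just after Theorem~\ref{retractN}: for any $f\in X^{\aleph_0,\omega_1}$ the function $g$ taking value $1$ where $f\geq 0$ and $-1$ where $f<0$ still lies in $X^{\aleph_0,\omega_1}$ and satisfies $g^{2}=1$, which yields the required clopen separation in $K$.

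The delicate point is the weight computation, and this is where CH intervenes. Since the weight of $K$ equals the density character of $C(K)$, I would first bound it from above by estimating $|X^{\aleph_0,\omega_1}|$: a function in $X^{\aleph_0,\omega_1}$ is determined by a countable set $A\subset\omega_1$, a real-valued function on $A$, and the constant value outside $A$. Under CH this gives
\[
|X^{\aleph_0,\omega_1}|\;\leq\; \omega_1^{\aleph_0}\cdot\mathfrak{c}^{\aleph_0}\cdot\mathfrak{c} \;=\; \omega_1,
\]
using $\mathfrak{c}=\omega_1$ and hence $\omega_1^{\aleph_0}=(2^{\aleph_0})^{\aleph_0}=\omega_1$. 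For the lower bound, the characteristic functions $\chi_{\{\alpha\}}$, $\alpha<\omega_1$, all lie in $X^{\aleph_0,\omega_1}$ and are pairwise at sup-distance $1$, so $C(K)$ is non-separable and the weight of $K$ equals exactly $\omega_1$.

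With these three verifications in place, Theorem~\ref{retractN} produces continuous maps $e\colon K\to \mathbb{N}^*$ and $r\colon \mathbb{N}^*\to K$ with $r\circ e=\mathrm{Id}_K$, and Lemma~\ref{BFPPretract} then transfers the failure of BFPP from $C(K)$ to $C(\mathbb{N}^*)\cong \ell_\infty/c_0$. The heart of the argument, and essentially the only step where CH is actually used, is the appeal to Theorem~\ref{retractN}; the main task on our side is purely bookkeeping, namely verifying that $K^{\aleph_0,\omega_1}$ fits into van Mill's hypotheses, with the weight computation being the one substantive checkpoint.
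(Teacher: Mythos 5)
Your proposal is correct and follows essentially the same route as the paper: take $K^{\aleph_0,\omega_1}$, verify it is a zero-dimensional compact $F$-space whose weight is $\omega_1$ under CH (with the same upper bound via the cardinality of $X^{\aleph_0,\omega_1}$ and the same lower bound via the characteristic functions of singletons), and then combine Theorem~\ref{retractN} with Lemma~\ref{BFPPretract} and Theorem~\ref{XnotBFPP}. No substantive differences to report.
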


\begin{proof}
We check that  the compact space $K^{\omega,\omega_1}$ from Section~\ref{sectionExamples} satisfies the remaining hypotheses of Theorem \ref{retractN}, that is, that  the weight of $K^{\omega,\omega_1}$ is
 $\omega_1$ (under CH). 
  As for the computation of the weight, since each function of $X^{\omega,\omega_1}\cong C(K^{\omega,\omega_1})$ is constant out of a countable set, we can define an injective mapping
$$X^{\omega,\omega_1} \To\mathbb{R} \times \omega_1^\mathbb{N} \times \mathbb{R}^\mathbb{N}$$ that sends each $f\in X^{\omega,\omega_1}$ to a triple $(s,(\gamma_n),(r_n))$ so that $f(\gamma_n)=r_n$ and $f(\beta) = s$ whenever $\beta\not\in \{\gamma_n : n\in\mathbb{N}\}$. Thus, the cardinality of $C(K^{\omega,\omega_1})$ is, under CH, at most
$$|\mathbb{R}| \cdot |\omega_1^\mathbb{N}| \cdot |\mathbb{R}^\mathbb{N}| = |\mathbb{R}| \cdot |\mathbb{R}^{\mathbb{N}}| \cdot |\mathbb{R}^\mathbb{N}| = |\mathbb{R}| = \omega_1.$$
The weight of $K^{\omega,\omega_1}$ is therefore at most $\omega_1$. It cannot be less than that, because the characteristic functions of singletons are $\omega_1$ many elements in $X^{\omega,\omega_1}$ at distance 1 from each other, so $C(K^{\omega,\omega_1})\cong X^{\omega,\omega_1}$ cannot contain a countable dense set.

If $C(\mathbb{N}^*)$ had the BFPP, then by Theorem~\ref{retractN} and Lemma \ref{BFPPretract},  $C(K^{\omega,\omega_1})$ would also have it. However, since  $C(K^{\omega,\omega_1})$ is isometric to $X^{\omega,\omega_1}$, it follows from  Theorem~\ref{XnotBFPP} that $C(K^{\omega,\omega_1})$ does not have the BFPP.

\end{proof}

In general, for every locally compact and $\sigma$-compact topological space $Z$, the  remainder space $Z^* = \beta Z\setminus Z$
 is an $F$-space (see for instance \cite[Proposition 1.5.12]{Lau}). Thus,  it is natural to wonder about the BFPP for these kind of compact spaces. For example, what about $[0,+\infty)^*$? (Remember that $C(\beta[0,+\infty))$ fails the BFPP by Corollary \ref{sequence}. At first sight, the idea of relating the space $[0,+\infty)^*$ to the ones previously considered via retracts would seem hopeless, because $[0,+\infty)^*$ is now connected. However, we can still work at the level of Banach spaces using averaging operators. 

\begin{cor}
	Under CH, $C([0,+\infty)^*)$ fails the BFPP.
\end{cor}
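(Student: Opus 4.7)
The plan is to reduce the statement to the failure of the BFPP for $C(\mathbb{N}^*)$ provided by Theorem~\ref{CH}, using Lemma~\ref{retractBanach}. Since $[0,+\infty)^*$ is connected while $\mathbb{N}^*$ is zero-dimensional, no continuous surjection of the former onto the latter exists, so Lemma~\ref{BFPPretract} cannot be applied directly. Instead, I will construct nonexpansive linear operators $E\colon C(\mathbb{N}^*)\to C([0,+\infty)^*)$ and $R\colon C([0,+\infty)^*)\to C(\mathbb{N}^*)$ with $R\circ E=\text{Id}_{C(\mathbb{N}^*)}$; Lemma~\ref{retractBanach} together with Theorem~\ref{CH} will then do the rest.

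For $R$, observe that $\mathbb{N}$ is closed and discrete in $[0,+\infty)$, hence $C^*$-embedded by Tietze's theorem. Consequently, the closure of $\mathbb{N}$ inside $\beta[0,+\infty)$ is canonically homeomorphic to $\beta\mathbb{N}$, and a short argument shows that a free ultrafilter on $\mathbb{N}$ cannot be an accumulation point of $[0,+\infty)\setminus\mathbb{N}$, so the remainder $\mathbb{N}^*$ sits as a closed subset of $[0,+\infty)^*$. I then take $R(g):=g|_{\mathbb{N}^*}$, which is obviously linear and contractive.

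For $E$, I use a piecewise linear averaging. Given $f\in C(\mathbb{N}^*)\cong \ell_\infty/c_0$ and any representative $a=(a_n)_n\in\ell_\infty$, define $\bar a\colon [0,+\infty)\to\mathbb{R}$ by $\bar a(n)=a_n$ with $\bar a$ linear on each interval $[n,n+1]$. Then $\bar a$ is bounded and continuous with $\|\bar a\|_\infty=\|a\|_\infty$, so it extends uniquely to a continuous function on $\beta[0,+\infty)$. Crucially, if $a\in c_0$ then $\bar a(t)\to 0$ as $t\to +\infty$, so $\bar a$ vanishes identically on $[0,+\infty)^*$; this shows that the prescription $E(f):=\bar a|_{[0,+\infty)^*}$ descends to a well-defined linear nonexpansive operator on the quotient $\ell_\infty/c_0$. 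To verify $R\circ E=\text{Id}$, note that the restriction of $\bar a$ to the copy of $\beta\mathbb{N}$ inside $\beta[0,+\infty)$ is a continuous extension of the sequence $(\bar a(n))_n=a$, so by uniqueness of such extensions it coincides with $a$ viewed as an element of $C(\beta\mathbb{N})=\ell_\infty$; restricting further to $\mathbb{N}^*$ returns the class $[a]=f$.

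The main step is the construction of $E$: one has to pass from a bounded sequence to a bounded continuous function on $[0,+\infty)$ in a way that is simultaneously linear, contractive, compatible with the $c_0$-quotient (so that it induces a well-defined operator out of $C(\mathbb{N}^*)$), and compatible with the canonical identification $\mathbb{N}^*\subset[0,+\infty)^*$ (so that the composition with $R$ is the identity). Piecewise linear interpolation between integer nodes satisfies all four requirements, and this is the heart of the argument; the topological identification of $\mathbb{N}^*$ as a closed subspace of $[0,+\infty)^*$ and the verification that $E$ and $R$ compose to the identity are then routine.
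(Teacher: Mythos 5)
Your argument is correct and is essentially the paper's own proof: the paper also produces the reduction to $C(\mathbb{N}^*)=\ell_\infty/c_0$ via Lemma~\ref{retractBanach}, using exactly the pair ``sample at the integers'' and ``piecewise-linear interpolation between integer nodes'' on representatives of the quotient classes. The only cosmetic differences are that your labels $E$ and $R$ are swapped relative to the lemma's statement (harmless after matching $X=C([0,+\infty)^*)$, $Y=C(\mathbb{N}^*)$) and that you phrase the sampling operator topologically, via the $C^*$-embedded copy of $\mathbb{N}^*$ inside $[0,+\infty)^*$, where the paper just works with representatives.
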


\begin{proof}
 We identify $C(\mathbb{N}^*)$ with the quotient space $Y = \ell_\infty/c_0$ and we identify $C([0,+\infty)^*)$ with the quotient space $X$ of $C_b[0,\infty)$ of bounded continuous functions on $[0,\infty)$ by the subspace of functions with zero limit at infinity. Define an operator $E\colon X\To Y$ that sends the equivalence class of a function $f\colon [0,+\infty)\To \mathbb{R}$ to the equivalence class of the sequence $(f(n))_{n\in\mathbb{N}}$. Define an operator $R\colon Y\To X$ that sends the equivalence class of a sequence $(a_n)_{n\in\mathbb{N}}$ to the equivalence class of the function $$f(x) =  \begin{cases} a_x & \text{ if } x\in \mathbb{N},\\  (\lceil x \rceil - x)\cdot a_{\lfloor x\rfloor} + (x-\lfloor x\rfloor) \cdot a_{\lceil x\rceil } & \text{ if } x\not\in\mathbb{N}, \end{cases}$$
 where by $\lfloor x\rfloor$ and $\lceil x\rceil$  we denote the floor and ceiling function respectively; and $a_0:=0$.

 These operators satisfy the assumptions of Lemma~\ref{retractBanach}. Thus, if $C([0,+\infty)^*)$ had the BFPP so would $C(\mathbb{N}^*)$, which contradicts   Theorem~\ref{CH}.

\end{proof}

\section{Some questions and remarks}

In Section \ref{sectionExamples} we proved that the space $C(K^{\kappa,\Gamma},p^{\kappa,\Gamma})$, which is isometric to $X^{\kappa,\Gamma}_0$, fails the BFPP. Motivated by this family of examples, we can raise the following general question: Given an infinite compact set $K$ and a non-isolated $p\in K$,  is it always the case that $C(K,p):=\{f\in C(K): f(p)=0\}$ fails the BFPP? We do not have a complete answer to this question. However we can provide some answers to some particular cases.\\

Recall that $p\in K$ is  said to be a $P$-point  if every continuous function $g\in C(K)$ is constant on an open neighborhood of $p$. Equivalently, if a countable intersection of open neighborhoods of $p$ always contains an open neighborhood of $p$. If $p\in K$ fails to be a $P$-point, we say that $p$ is a non-$P$-point. In case that $K$ contains a nontrivial convergent sequence, its limit point is a non-$P$-point.  In fact, it is well-known that  every infinite compact space contains  non-$P$-points.
 Indeed, if all points of $K$ were $P$-points, by compactness all continuous functions on $K$  would be finite valued.
But if $\{x_n\}$ is a sequence of different points of $K$ with $x_n \notin \overline{\{x_m: m\neq n\}}$ for every $n\in \mathbb{N}$ and $f_n\colon K\To [0,1]$ are continuous functions with $f_n(x_m)=1$ if $m = n$ and $f_n(x_m)=0$ if $n \neq m$, then  $f:=\sum_{k=1}^\infty {1\over 2^k} f_k$ is a  continuous function that verifies  $f(x_n)={1\over 2^{n}}$ for all $n\in\mathbb{N}$,  producing infinitely many different values.  
\\

\begin{theorem}\label{C0}
If $p$ is a non-$P$-point of $K$, then  $C(K,p)$ fails the BFPP.
\end{theorem}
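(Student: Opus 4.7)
The plan is to adapt the construction from the proof of Theorem~\ref{non-F}, modifying it so that the mapping preserves the subspace of functions vanishing at $p$. First I exploit the non-$P$-point condition: by definition there exists some $h \in C(K)$ that is not constant on any open neighborhood of $p$. Replacing $h$ by $h - h(p)$, then by $|h|$, and finally normalizing, I obtain a continuous function $g \colon K \to [0,1]$ with $g(p) = 0$ such that $\{x \in K : g(x) > 0\}$ meets every neighborhood of $p$; equivalently, $p \in \overline{\{x \in K : g(x) > 0\}}$.

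Next I define the mapping $T \colon B_{C(K,p)} \to B_{C(K,p)}$ by
\[
Tf = (1 - g)\,f + g.
\]
Since $g(p) = 0$, we have $Tf(p) = f(p) = 0$, so $Tf \in C(K,p)$; the pointwise estimate $|Tf(x)| \leq (1 - g(x))|f(x)| + g(x) \leq 1$ shows that $T$ sends the closed unit ball into itself; and $Tf - Tf' = (1-g)(f - f')$ gives at once $\|Tf - Tf'\|_\infty \leq \|f - f'\|_\infty$, so $T$ is nonexpansive.

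To conclude, suppose $T$ had a fixed point $f \in B_{C(K,p)}$. The equation $(1-g)f + g = f$ reduces to $g\,(f-1) \equiv 0$, so $f$ is identically $1$ on the open set $\{g > 0\}$. Since $p \in \overline{\{g > 0\}}$, continuity of $f$ forces $f(p) = 1$, contradicting $f(p) = 0$. The only delicate step is the extraction of the auxiliary function $g$ from the non-$P$-point hypothesis; beyond that, everything is a direct and painless modification of the argument of Theorem~\ref{non-F}, the point being that having $g(p) = 0$ is exactly what makes $T$ leave $C(K,p)$ invariant, while having $p$ in the closure of $\{g > 0\}$ is exactly what rules out a fixed point.
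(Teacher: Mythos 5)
Your proposal is correct and follows essentially the same route as the paper: the same auxiliary function $g$ with $g(p)=0$, $0\le g\le 1$, witnessing the non-$P$-point condition, the same map $Tf=(1-g)f+g$, and the same contradiction from $g(1-f)=0$ (the paper phrases it as $g$ vanishing on the neighborhood $f^{-1}(-\tfrac12,\tfrac12)$ of $p$, while you phrase it as $f\equiv 1$ on $\{g>0\}$ forcing $f(p)=1$; these are equivalent). No gaps.
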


\begin{proof}

Consider $g\in C(K)$ such  that $g$ fails to be constant at any open neighborhood of $p$. 
 Without loss of generality we can assume that $g(p)=0$ and, considering $\min\{|g|,1\}$, we can suppose that $0\le g\le 1$. 
For all $f\in C(K)$ define $Tf=(1-g)f +g$. Notice that $T$ is nonexpansive since $\V Tf-Th\V_\infty\le \V f-h\V_\infty$ for all $f,h\in C(K)$ and $T(B_{C(K,p)})\subset B_{C(K,p)}$. Nevertheless, $T$ is fixed point free. Indeed,  if there were  some $f\in C(K,p)$ such that $Tf=f$ then $g(1-f)=0$. This would imply that 
the function  $g$ has to be null on the open set $f^{-1}({-1\over 2},{1\over 2})$, which  clearly contains $p$.
\end{proof}

Theorem \ref{C0} includes, as a particular case, the Banach space $c_0$. As a remark concerning the separable Banach spaces  $c_0$ and $c$, it is known that the only  closed convex bounded subsets in $c_0$  with the FPP  are the weakly compact ones \cite{DLT}, and  it is  an open problem to characterize which closed convex bounded subsets of $c$ verify the FPP, since they stretch out of the family of weakly compact ones \cite{BS,EJS, GLP}.\\

From  Theorem \ref{C0} we can also conclude that for every infinite compact space $K$ there is some point $p\in K$ such that $C(K,p)$  fails the BFPP.\\

It is natural to ask whether the converse of Theorem \ref{C0} holds whenever $C(K)$ has the BFPP, i.e.~ if $C(K,p)$ has the BFPP whenever $p$ is a $P$-point of $K$ and $C(K)$ has the BFPP. We do not know the answer to this question.

For the class of spaces $C(K^{\kappa,\Gamma},p^{\kappa,\Gamma})$  introduced in Section \ref{sectionExamples} and for which we proved the failure of the BFPP, the point $p^{\kappa,\Gamma}$  is in fact  a $P$-point of $K^{\kappa,\Gamma}$.
Indeed,  the property of being a $P$-point of $K$ can be translated into saying that for every $f\in C(K,p)$ there exists $g\in C(K,p)$ such that $f\cdot (1-g) =0$. For every $f\in X^{\kappa,\Gamma}_0$ there is $g\in X^{\kappa,\Gamma}_0$ with $f\cdot (1-g) = 0$. Just make sure that $g$ takes value 1 on the small set where $f$ does not vanish. 
Nevertheless, we have shown that the space $X^{\kappa,\Gamma}$ already fails the BFPP.

%
%
%
%

For the topological compact space $\beta\mathbb{N}$, the only $P$-points  are the principal ultrafilters, which are indeed isolated points of $\beta\mathbb{N}$. So $C(\beta\mathbb{N},p)$ has the BFPP if and only if $p$ is a principal ultrafilter, in which case $C(\beta\mathbb{N},p) \cong C(\beta\mathbb{N})\cong \ell_\infty$. The existence of $P$-points in the \v{C}ech-Stone remainder $\mathbb{N}^*$ is a much more delicate set-theoretic question. One can construct such $P$-points under the Continuum Hypothesis \cite{RudinPpoints}, but there are other models of set theory in which $\mathbb{N}^*$ contains no $P$-points \cite{ShelahPpoints}.\\

We finally conclude the paper by highlighting several  questions that may arise after reading the article, which we believe could be of significant interest for future research in the area:
\begin{itemize} 

\item[i)] Is it possible to find an infinite compact set $K$ and a non-isolated point $p\in K$ such that $C(K,p)=\{f\in C(K): f(p)=0\}$ has the BFPP?

\item[ii)] Is it possible to construct an explicit  example of a fixed point free nonexpansive mapping from the unit ball of $\ell_\infty/c_0$ into itself (without turning to CH)?

\item[iii)] Is it possible to exhibit an example of a fixed point free nonexpansive mapping from the unit ball of $C(K)$ into itself, whenever $C(K)$ fails to be order-complete? In this case, we would have a topological characterization of the BFPP  for Banach spaces of continuous functions:  $C(K)$ satisfies the BFPP if and only if $K$ is extremally disconnected.

\end{itemize}

\end{document}